\newtheorem{thm}{Theorem}[section]
\newtheorem{lem}[thm]{Lemma}
\newtheorem{rem}[thm]{Remark}
\newtheorem{prop}[thm]{Proposition}
\newtheorem{cor}[thm]{Corollary}
\newtheorem{claim}[thm]{Claim}
\newtheorem{ques}[thm]{Question}
\newcommand\ct{{\text{\rm ct}}}
\newcommand\lct{{\text{\rm lct}}}
\newcommand\cE{{\mathcal{E}}}
\newcommand\cX{{\mathcal{X}}}
\newcommand\cY{{\mathcal{Y}}}
\newcommand\cZ{{\mathcal{Z}}}
\newcommand\bR{{\mathbb R}}
\newcommand\bC{{\mathbb C}}
\newcommand\bQ{{\mathbb Q}}
\newcommand\bN{{\mathbb N}}
\newcommand\bP{{\mathbb P}}
\begin{document}
\title{Accumulation points on 3-fold canonical thresholds}
\author{Jheng-Jie Chen}
\address{\rm Department of Mathematics, National Central University, Taoyuan City, 320, Taiwan}
\email{jhengjie@math.ncu.edu.tw}

\maketitle

\begin{abstract}  
We obtain that the nonzero accumulation points of the set of 3-fold canonical thresholds $\ct(X,S)$ are precisely $1/k$ where $k\ge 2$ is an integer and $S$ is an effective integral divisor of a projective 3-fold $X$ with only terminal singularities.  
Moreover, we generalize the ascending chain condition for the set of 3-fold canonical thresholds to pair.
\end{abstract}

\section{introduction}

We work over the complex number field $\mathbb{C}$.

For a log canonical pair $(X,B)$ and an effective $\bR$-Cartier divisor $S$, the log canonical threshold of $S$ with respect to $(X,B)$ is defined by
$$\lct(X,B;S):=\sup\{t\in \bR\ |\ (X,B+tS) \textup{ is log canonical}\}.$$
The log canonical threshold is a fundamental invariant in the study of birational geometry (See \cite{Kol97,Kol08,MP04}).
Recently, de Fernex, Ein and Musta\c{t}\u{a} prove Shokurov's ascending chain condition (ACC) conjecture for log canonical thresholds on varieties that are locally complete intersection \cite{dFEM}. Then,  
Hacon, Mckernan and Xu prove the general case in \cite[Theorem 1.1]{HMX}. 

Given an ACC set. It is natural to study the set of accumulation points. In \cite{Kol97,Kol08}, Koll\'{a}r conjectures that the set of the accumulation points in dimension $n$ equals to the set of log canonical thresholds in dimension $n-1$. In \cite{MP04}, McKernan and Prokhorov prove Koll\'{a}r conjecture in dimension $3$. 
In arbitrary dimension, de Fernex and Musta\c{t}\u{a} and Koll\'{a}r independently show that Koll\'{a}r's conjecture holds for smooth case (see \cite{dFM,Kol08}). Then, Hacon, Mckernan and Xu prove the general case in \cite[Theorem 1.11]{HMX}.

In this paper, we consider its analog. Let $X$ be a $\bQ$-factorial projective variety with at worst canonical singularities and $S$ be an integral and effective divisor. The canonical threshold of the pair $(X,S)$ is defined to be $$\ct(X;S):=\sup\{t\in \bR\ |\ \textup{the pair }(X,tS) \textup{ is canonical}\}.$$ For every positive number $n$, we define the set of canonical threshold by
$$\mathcal{T}_{n}^{\textup{can}} := \{\ct(X, S) | \dim X = n \}.$$

It is known and easy to compute $\mathcal{T}_{2}^{\textup{can}}=\{\frac{1}{k}\}_{k\in \mathbb{N}}$ where $\mathbb{N}$ denotes the set of positive integers. However, it is very difficult to describe $\mathcal{T}_{n}^{\textup{can}}$ for $n\ge 3$.
In the case $n=3$, Prokhorov shows that the largest canonical threshold (less than 1) is $\frac{5}{6}$ (resp. $\frac{4}{5}$) when $X$ is smooth (resp. singular) in \cite{Prok08}. 
Then, Stepanov studies the set $$\mathcal{T}_{3, \textup{sm}}^{\textup{can}}:=\{\ct(X,S)\ |\ \dim X=3, X\textup{ is smooth}\}.$$ He obtains that $\mathcal{T}_{3, \textup{sm}}^{\textup{can}}$ satisfies the ACC and establishes the explicit formula for $\ct_P(X,S)$ when $P\in S$ is a Brieskorn singularity in \cite{Stepanov}. Then, in \cite{3ct}, the author proves the ACC for $\mathcal{T}_{3}^{\textup{can}}$ (by applying Stepanov's argument) and obtains that the intersection $\mathcal{T}_{3}^{\textup{can}}\cap (\frac{1}{2},1)$ coincides with $\{ \frac{1}{2}+\frac{1}{k}\}_{k \ge 3} \cup \{ \frac{4}{5}\}$. We note that the set $\mathcal{T}_{3}^{\textup{can}}\cap (\frac{1}{2},1)$ has the accumulation point $1/2$.
 
The aim of this paper is to prove the following:
\begin{thm}\label{accpt}
The set of accumulation points of $\mathcal{T}_{3}^{\textup{can}}$ consists of $\{0\}\cup\{1/k\}_{k\in \mathbb{N}_{>1}}$. 
\end{thm}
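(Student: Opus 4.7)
The plan is to prove two directions: first, that every point of $\{0\} \cup \{1/k\}_{k \ge 2}$ is an accumulation point of $\mathcal{T}_{3}^{\text{can}}$, and second, that no other value in $[0,1]$ is. By the ACC for $\mathcal{T}_{3}^{\text{can}}$ established in \cite{3ct}, accumulation can only occur from above, so both directions reduce to controlling sequences decreasing to a candidate point.

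For the forward direction, the trivial embedding $\mathcal{T}_{2}^{\text{can}} \hookrightarrow \mathcal{T}_{3}^{\text{can}}$ obtained by taking products with $\bA^{1}$ places $\{1/k\}_{k \ge 1}$ inside $\mathcal{T}_{3}^{\text{can}}$ and immediately makes $0$ an accumulation point. For each $k \ge 2$ I would construct a sequence decreasing to $1/k$. A natural template takes $X = \bA^{3}$ with $S_{n} = (f_{n} = 0)$, where $f_{n}$ is semi-quasi-homogeneous of weighted degree $kn + k - 1$ with respect to weights $(1,1,n)$, for instance $f_{n} = x^{kn+k-1} + y^{kn+k-1} + z\bigl(x^{(k-1)n+k-1} + y^{(k-1)n+k-1}\bigr)$. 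The exceptional divisor $E_{n}$ of the corresponding weighted blowup has $a(E_{n};X) = n+1$ and $\mult_{E_{n}} S_{n} = kn + k - 1$, hence $\ct(X, S_{n}) \le (n+1)/(kn+k-1)$, and this ratio strictly decreases to $1/k$ from above as $n \to \infty$. Equality is verified either through Stepanov's explicit Brieskorn-type formulas in \cite{Stepanov} or through a direct discrepancy check for generic such $f_{n}$. The case $k = 2$ is already contained in the description $\mathcal{T}_{3}^{\text{can}} \cap (1/2, 1) = \{1/2 + 1/n\}_{n \ge 3} \cup \{4/5\}$ from \cite{3ct}.

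For the reverse direction, fix $t_{0} \in (0, 1] \setminus \{1/k : k \ge 2\}$ and suppose for contradiction that $\ct(X_{n}, S_{n}) = t_{n} \searrow t_{0}$ with the $t_{n}$ pairwise distinct. Choose divisorial valuations $E_{n}$ over $X_{n}$ computing the threshold, so that $t_{n} = a(E_{n}; X_{n})/\mult_{E_{n}} S_{n}$. After passing to a subsequence, I may assume the centers of $E_{n}$ lie at analytically equivalent terminal 3-fold singularities. Invoking Reid's general elephant theorem together with the Mori--Reid classification of terminal 3-fold singularities reduces the local problem to weighted blowups over $\bA^{3}$ or over a cyclic quotient model $\frac{1}{r}(a, -a, 1)$. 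A uniform control of the admissible pairs $(a(E), \mult_{E} S)$ on these models -- parallel to Stepanov's smooth analysis \cite{Stepanov} and the author's own treatment of $(1/2, 1)$ in \cite{3ct} -- then shows that $a/m$ can only accumulate at $0$ or at $1/k$ for some integer $k \ge 2$, contradicting the choice of $t_{0}$.

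The main obstacle is the reverse direction for small $t_{0}$: divisorial valuations of arbitrarily large discrepancy and multiplicity must be controlled over terminal singularities of unbounded index. This requires a uniform weighted-blowup calculus (in the spirit of Kawakita's classification of divisorial contractions to terminal points) together with a systematic arithmetic analysis of which ratios $a/m$, subject to the structural constraints imposed by the singularity type, can accumulate in $(0,1]$. The interval $(1/2, 1)$ is already settled in \cite{3ct}, so the essential new content is to propagate this analysis through every accumulating interval $(1/(k+1), 1/k)$ for $k \ge 2$.
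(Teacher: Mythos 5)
Your overall strategy — use the ACC, handle the two directions separately, and reduce the reverse direction to a case analysis over the classification of divisorial extractions computing the threshold — matches the paper's. The forward direction is fine in spirit: the paper also gets $0$ from the sequence $\{1/i\}$ (realized there via quotient germs $\bC^3/\tfrac12(1,1,1)$) and cites Stepanov's Theorem~3.6 for each $1/k$ being a limit of $\mathcal{T}_{3,\mathrm{sm}}^{\mathrm{can}}\cap(1/k,1/(k-1))$; your explicit $f_n$ with weight $(1,1,n)$ is a reasonable alternative, though you would still need to show the weighted blowup actually \emph{computes} $\ct$, not merely bounds it from above, and your $f_n$ is not literally of Brieskorn type, so Stepanov's formula does not apply verbatim.

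The genuine gap is the reverse direction, which is where essentially all of the paper's content lies and which your proposal leaves as a program rather than a proof. Your reduction ``to weighted blowups over $\bA^3$ or a cyclic quotient $\tfrac1r(a,-a,1)$'' via general elephant plus Mori--Reid is not the right shape of the classification: what one actually needs is Kawakita's (and Hayakawa's) classification of the divisorial \emph{contractions}, which presents the relevant valuations as explicit weighted blowups over models of type $cA$ ($xy+g(z,u)=0\subset\bC^4$), $cA/n$ ($xy+g(z^n,u)=0\subset\bC^4/\tfrac1n(1,-1,b,0)$), $cD$, and $cD/2$, the $cD$/$cD/2$ cases even living in $\bC^5$ or $\bC^5/\tfrac12$. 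These are not cyclic-quotient germs. Once in that framework, the paper's argument proceeds through the decomposition $\mathcal{T}_3^{\mathrm{can}}=\aleph_4\cup\mathcal{T}_{3,\mathrm{sm}}^{\mathrm{can}}\cup\mathcal{T}_{3,cA}^{\mathrm{can}}\cup\mathcal{T}_{3,cA/n}^{\mathrm{can}}\cup\mathcal{T}_{3,cD}^{\mathrm{can}}\cup\mathcal{T}_{3,cD/2}^{\mathrm{can}}$ and, for each piece restricted to $(1/k,1/(k-1))$, derives concrete arithmetic bounds by comparing the threshold-computing weight $w$ with a lower weight $w_{a-1}$ or $w_{a-n}$: one gets $\ct=\tfrac1k+\tfrac{q}{p}$ with $q\le 2k$ in the smooth and $cA$ cases, $n\le 3k$ and $dn<4k$ in the $cA/n$ case, finiteness ($r\lesssim k^2$) in the $cD$ case, and $d\le k$ together with a squeeze in the $cD/2$ case. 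These are exactly the steps your proposal signals as ``to be done'' without doing; the passage to a common singularity type for a subsequence also requires bounding the index in terms of $k$ (the $cA/n$ index bound $n\le 3k$), which your outline does not address. Without this arithmetic the claim that $a/m$ ``can only accumulate at $0$ or $1/k$'' is an assertion, not an argument.
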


Note that Theorem \ref{accpt} is analogous to the main theorem in \cite{MP04} and our argument provides an alternative method for the ACC for $\mathcal{T}_{3}^{\textup{can}}$. This gives partial answer to Question 8.1 in \cite{3ct}.

Once $\mathcal{T}_{3}^{\textup{can}}$ satisfies the ACC, it might not be difficult for experts to obtain the generalization of the ACC for $\mathcal{T}_{3}^{\textup{can}}$ to pair. That is, the ACC for $1$-lc thresholds holds in dimension $3$ (See \cite{BS} for the notion of $1$-lc thresholds). We provide its argument here for reader's convenience.
In order to give the statement, we fix the following similar notions as in \cite{HMX}. For a $\bQ$-Cartier effective divisor $S$ on a $\bQ$-factorial projective 3-fold $X$, we define 3-fold canonical threshold of $S$ with respect to the canonical pair $(X,B)$ by $$\ct(X,B;S):=\sup\{t\in \bR\ |\ (X,B+tS) \textup{ is canonical}\}.$$
For $I\subset [0,1]$ and $J\subset \bR_{>0}$, we define 
$$CT_3(I;J):=\{\ct(X,B;S)\ |\ B\ (\mbox{resp.}\  S\text{) on $X$ has coefficients in }I\ (\mbox{resp.}\  J )\}.$$ 
\begin{thm}[ACC for 3-fold canonical thresholds (pair version)]\label{ACCctpair}
Keep notions above.
Suppose both $I\subset [0,1]$ and $J\subset \bR_{>0}$ satisfy the descending chain condition (DCC). 
Then the set $CT_3(I;J)$ satisfies the ACC.
\end{thm}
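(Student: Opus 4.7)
My plan is to deduce Theorem \ref{ACCctpair} from the Hacon--McKernan--Xu ACC for log canonical thresholds \cite[Theorem~1.5]{HMX}, via the standard MMP-extraction technique together with adjunction to the extracted divisor. Suppose for contradiction there is a strictly increasing sequence $c_i = \ct(X_i, B_i; S_i) \in CT_3(I;J)$ converging to some $c^{*} \in (0,1]$. Since $I$ and $J$ are DCC and $\{c_i\}$ is bounded and increasing, the combined set of coefficients appearing in $B_i + c_i S_i$ lies in a fixed DCC subset of $[0,1]$, which is the coefficient regime to which \cite{HMX} will eventually be applied.

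For each $i$, the infimum defining $c_i$ is attained by some exceptional divisor $E_i$ over $X_i$ with $a(E_i; X_i, B_i + c_i S_i) = 0$. By a BCHM-style extraction, one produces a projective birational morphism $\pi_i\colon Y_i \to X_i$ with $E_i$ the unique $\pi_i$-exceptional divisor and $-E_i$ being $\pi_i$-ample; the crepancy relation
\[
K_{Y_i} + B_{Y_i} + c_i S_{Y_i} = \pi_i^{*}(K_{X_i} + B_i + c_i S_i)
\]
shows that $(Y_i, B_{Y_i} + c_i S_{Y_i})$ is canonical, and that $(Y_i, \Delta_i + c_i S_{Y_i})$ with $\Delta_i := B_{Y_i} + E_i$ is log canonical (the coefficients of $\Delta_i$ lying in the DCC set $I\cup\{1\}$). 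Applying adjunction to $E_i$ then produces an lc pair on the normal projective surface $E_i$ whose coefficients lie in a DCC subset of $[0,1]$ by the different formula. By inversion of adjunction the maximality of $c_i$ as the canonical threshold translates into an extremality property of a 2-dimensional log canonical threshold with DCC data, and \cite[Theorem~1.5]{HMX} in dimension $2$ then forces $\{c_i\}$ to lie in an ACC set, contradicting strict monotonicity.

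The main technical obstacle is the precise identification of $c_i$ as a surface-level lct. The naive equality $c_i = \operatorname{lct}(Y_i, \Delta_i; S_{Y_i})$ fails already on $(\bA^3, \{xyz=0\})$, where $\ct = 2/3$ but the extracted-pair lct equals $1$; the correct correspondence must track the multiplicities $m_i := \operatorname{mult}_{E_i}(\pi_i^{*}S_i)$ and $\operatorname{mult}_{E_i}(\pi_i^{*}B_j)$, together with the discrepancy $a(E_i; X_i)$, all of which for a bounded class of extracted divisors realizing canonical thresholds in a bounded range form a discrete set. Verifying that this discreteness holds uniformly in $i$ — so that combined with the DCC of all coefficient data it gives the ACC for $\{c_i\}$ — is where the 3-fold specific input from \cite{Prok08,Stepanov,3ct} must enter; this bookkeeping is the step I expect to be the most delicate part of the argument.
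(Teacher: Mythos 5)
Your plan takes a genuinely different route, but it contains a gap that is not a matter of bookkeeping. You propose to reduce to the Hacon--McKernan--Xu ACC for log canonical thresholds via extraction of the critical divisor $E_i$ and adjunction to the surface $E_i$. You correctly observe that the naive equality $c_i = \lct(Y_i,\Delta_i;S_{Y_i})$ fails (the $(\bA^3,\{xyz=0\})$ example is the right cautionary tale), and you propose to repair this by tracking the multiplicities $m_i = \mult_{E_i}(\pi_i^*S_i)$, the multiplicities of $B_i$, and the discrepancy $a(E_i;X_i)$, asserting that ``for a bounded class of extracted divisors realizing canonical thresholds in a bounded range'' these form a discrete set. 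This assertion is false in the relevant regime: for a strictly increasing sequence of canonical thresholds converging to an accumulation point such as $1/k$, the weighted discrepancies $a_i$ and weighted multiplicities $m_i$ are unbounded (compare the sequences $a_i\to\infty$, $m_i\to\infty$ constructed in the proof of Proposition~\ref{cAnacc}). There is no boundedness of the extracted divisors, and hence no discreteness to appeal to. Since you flag this very step as ``the most delicate part of the argument,'' you have in fact located the essential difficulty, but the repair you sketch does not close it; the reduction to a surface lct is not available for canonical thresholds, which is precisely why this theorem does not follow formally from \cite[Theorem 1.1]{HMX}.

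The paper's proof avoids adjunction entirely. It uses the explicit classification of 3-fold divisorial contractions to points (Hayakawa, Kawakita, Kawamata) and the fact from \cite[Theorem 1.5]{3ct} that each threshold $\ct(X_i,B_i;S_i)$ is computed by a weighted blow up $\sigma_i$. After establishing that $\sigma_i$ is a $K_{Y_i}$-negative extremal divisorial contraction, one bounds the number of irreducible components of $B_i$ and $S_i$ (Claim~\ref{boundn}), bounds the index of the center via Lemma~\ref{boundpairindex}, and passes to subsequences where all centers share a type and index, the coefficient sequences are non-decreasing, and the Newton polytopes $\Gamma^+(g_{ik})$, $\Gamma^+(f_{ik})$ are non-increasing. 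The core of the argument is then combinatorial: for $i<j$ one constructs an interpolating admissible weight $w^i_j$ over $X_j$ with $n_i w_i\preceq n_j w^i_j$ and irreducible exceptional divisor, and the resulting inequalities on weighted multiplicities force $\ct(X_i,B_i;S_i)\ge \ct(X_j,B_j;S_j)$, contradicting strict monotonicity. This direct weight-comparison handles the unboundedness of $a_i$, $m_{ik}$, $p_{ik}$ without any finiteness or discreteness hypothesis — which is exactly what your plan is missing.
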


Recently, Jihao Liu informed the author that they also obtain Theorem \ref{accpt} and Theorem \ref{ACCctpair} in the work \cite[Theorem 1.7, Theorem 1.8]{HLL} where Theorem \ref{accpt} above plays an important role in proving the ACC for minimal log discrepancy(mld) for terminal threefolds (See \cite[page 5, Theorem 1.1, Theorem 4.1]{HLL}).

To prove Theorem \ref{accpt} and Theorem \ref{ACCctpair} above, we adopt the classification of divisorial contractions that contract divisors to points, due to Hayakawa, Kawakita, Kawamata, and many others (cf. \cite{Ka,Haya99,Haya00,Kawakita01,Kawakita02,Kawakita05}).
The argument in the classification of $\mathcal{T}_{3}^{\textup{can}}\cap (\frac{1}{2},1)$ in \cite[Theorem 1.3]{3ct} enables us to obtain Theorem \ref{accpt}. Then, Theorem \ref{ACCctpair} follows from Stepanov's argument of the ACC for $\mathcal{T}_{3}^{\textup{can}}$ in \cite[Theorem 1.2]{3ct} with more careful considerations.  

Note that the proof of Theorem \ref{accpt} is different from that of \cite[Theorem 4.8]{HLL} as they bypass the $cA/n$ and the $cD/2$ cases by taking cyclic covers. In fact, we observe some inequalities by comparing the weight $w$ that computes the canonical threshold with the weight $w_{a-n}$ where $a$ (resp. $a-n$) is the weighted discrepancy of the weighted blow up of $w$ (resp. $a-n$) and $n$ denotes the index of the center (See the arguments in Proposition \ref{cAnacc} and Case 1 in Proposition \ref{cD2acc}).

\noindent
{\bf Acknowledgement.}
The author was partially supported by
NCTS and MOST of Taiwan.
He expresses his gratitude to Professor Jungkai Alfred Chen for extensive help and invaluable discussions and suggestions.
He would like to thank Dr. Jihao Liu for asking him a question of the generalizing version of the ACC conjecture for canonical thresholds when the paper \cite{3ct} was posted on arXiv December 2019. He is thankful to Dr. Jihao Liu for recent communication on February 9th and 10th, 2022 and warm considerations.

\section{Proof of Theorem \ref{accpt}}
We follow notions in \cite[Sections 1,2]{3ct}. In this section, we denote by $S$ an integral and effective divisor on $X$ where $S$ is defined by a formal power series
$(f=0)$ locally.  

For $i\in \bN$, let $P_i\in X_i$ be the cyclic quotient $\bC^3/1/2(1,1,1)$ and $w=\frac{1}{2}(1,1,1)$ and let $S_i$ be an effective Weil divisor through $o\in X_i$ with defining semi-invariant formal power series $f_i$ such that $2w(f_i)=i$. Then $0$ is the canonical thresholds of $\{\ct_{P_i}(X_i,S_i)\}=\{\frac{1}{i}\}_{i\in \bN}$ by \cite{Ka}.

From the decomposition in \cite{3ct}
$$\mathcal{T}_{3}^{\textup{can}} = \aleph_4 \cup \mathcal{T}_{3,sm}^{\textup{can}} \cup \mathcal{T}_{3, cA}^{\textup{can}} \cup \mathcal{T}_{3, cA/n}^{\textup{can}} \cup \mathcal{T}_{3,cD}^{\textup{can}} \cup \mathcal{T}_{3, cD/2}^{\textup{can}} \eqno{\dagger}
,$$
Theorem \ref{accpt} follows from Propositions \ref{smacc}, \ref{cAacc}, \ref{cAnacc}, \ref{cDacc} and \ref {cD2acc}.

\begin{prop}\label{smacc}
Let $k\ge 2$ be an integer. Then the accumulation point of $\mathcal{T}_{3,sm}^{\textup{can}}\cap (\frac{1}{k},\frac{1}{k-1})$ is $\frac{1}{k}$. Moreover, if $\ct\in \mathcal{T}_{3,sm}^{\textup{can}}\cap (\frac{1}{k},\frac{1}{k-1})$, then $\ct=\frac{1}{k}+\frac{q}{p}$ with $p, q$ positive integers and $q\leq 2k$.
\end{prop}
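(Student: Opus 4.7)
The plan is to combine Stepanov's weighted blow-up description of canonical thresholds on smooth 3-folds with a combinatorial refinement in the spirit of \cite[Theorem 1.3]{3ct}. By Stepanov's theorem, for $S=(f=0)$ in formal coordinates at a smooth point $P\in X$, one has
\[
\ct_P(X,S)=\min_{w=(a_1,a_2,a_3)\in\bN^3}\frac{a_1+a_2+a_3-1}{w(f)},
\]
the minimum being realized by some primitive weight $w$. Fix such an optimizing $w$ and set $p:=a_1+a_2+a_3$, $m:=w(f)$. The hypothesis $\ct\in(1/k,1/(k-1))$ translates to $(k-1)(p-1)<m<k(p-1)$, and putting $q:=k(p-1)-m\in\{1,\dots,p-2\}$ immediately yields
\[
\ct(X,S)=\frac{p-1}{m}=\frac{1}{k}+\frac{q}{km},
\]
which is already of the desired form $1/k+q/p'$ with $p':=km\in\bN$.

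For the bound $q\le 2k$, the range $p\le 2k+1$ is trivial since $q\le p-2\le 2k-1$. For $p\ge 2k+2$ I would argue by contradiction: assume $q>2k$, i.e.\ $m<k(p-3)$, and construct a competing admissible weight $w'$ by subdividing $w$ on one of the charts $U_i$ of the weighted blow-up, using the transformation rule from the paper's excised Lemma (compare also the Hayakawa--Kawakita toric procedure invoked later in the $cA$, $cA/n$, $cD$, and $cD/2$ cases). The upshot is that the new discrepancy and weighted multiplicity transform linearly via $v^i$, and the inequality $q>2k$ provides enough slack to choose $v^i$ so that the new ratio $(a'_1+a'_2+a'_3-1)/w'(f)$ is strictly smaller than $(p-1)/m$, contradicting the optimality of $w$.

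The accumulation point statement then follows by combining the classification above with an explicit family. To see that $1/k$ IS accumulated inside $(1/k,1/(k-1))$, I would construct a sequence of pairs $(\bC^3,S_j)_{j\gg k}$ with $\ct_o(\bC^3,S_j)=1/k+\epsilon_j$ and $\epsilon_j\to 0^+$, for instance via Brieskorn-type hypersurfaces whose Stepanov weights produce ratios converging to $1/k$; such families are already implicit in \cite{Stepanov}. Conversely, the bound $q\le 2k$ yields the inclusion
\[
\mathcal{T}_{3,\textup{sm}}^{\textup{can}}\cap(1/k,1/(k-1))\subset\bigcup_{q=1}^{2k}\bigl\{\tfrac{1}{k}+\tfrac{q}{p'}:p'\in\bN\bigr\},
\]
a finite union of sequences each having $1/k$ as its unique accumulation point inside the interval, so no other accumulation point can arise.

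The main obstacle is the subdivision argument establishing $q\le 2k$ when $p\ge 2k+2$. Selecting the right chart $U_i$ and weight $v^i$ that simultaneously decreases the discrepancy-to-multiplicity ratio requires careful bookkeeping of which monomials of $f$ are responsible for the value $w(f)=m$ and of how their proper transforms behave on $U_i$; this is the combinatorial heart generalizing \cite[Theorem 1.3]{3ct} (where the tight bound $q\le 4=2\cdot 2$ is attained by $4/5=1/2+3/10$) from $k=2$ to arbitrary $k\ge 2$.
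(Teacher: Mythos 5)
Your normalization is correct: writing $\ct=(p-1)/m$ via Stepanov's formula and then $\ct-1/k=q/(km)$ with $q=k(p-1)-m\in\{1,\dots,p-2\}$ is the right move, and the easy observation that $q\le p-2\le 2k-1$ whenever $p\le 2k+1$ holds. But the entire content of the Proposition is the case $p\ge 2k+2$, and there you give only an unexecuted sketch (``choose $v^i$ so that the new ratio drops''), which you yourself identify as the main obstacle. As written this does not prove the bound, so the accumulation statement is not actually established.

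The paper closes this gap by using more structure than the bare minimizing weight. Since $\ct$ is realized by a divisorial contraction to the smooth point, Kawakita's classification forces the weight into the normal form $w=(1,\alpha,\beta)$ with $1\le\alpha<\beta$, so $p-1=\alpha+\beta$ and $q=\alpha k+\beta k-m$. Two estimates then pin $q$ down. First, from \cite[Proposition 3.3]{3ct} one has $\ct\le 1/\alpha+1/\beta$ when $\alpha>1$, so $\ct>1/k$ forces $\alpha<2k$. Second, instead of a toric chart subdivision, the paper compares $w$ with the explicit weight $w'=(1,\alpha,\beta-1)$; since $\bP(1,\alpha,\beta-1)$ is irreducible, \cite[Lemma 2.1]{3ct} yields
\[\Bigl\lfloor\tfrac{\alpha+\beta-1}{\alpha+\beta}m\Bigr\rfloor\ \ge\ m-\Bigl\lfloor\tfrac{m}{\beta}\Bigr\rfloor,\]
hence $\lfloor m/\beta\rfloor\ge\lceil m/(\alpha+\beta)\rceil=k$ and $m\ge\beta k$. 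Combining, $0<q=\alpha k+\beta k-m\le\alpha k$ with $\alpha<2k$. That explicit competing weight (decrement the largest entry of $w$ by $1$) is what your sketch is missing, and it is only available because of the $(1,\alpha,\beta)$ normal form, which your general $(a_1,a_2,a_3)$ formulation discards. (One small caveat: from $q/p\le\alpha/m$ alone one only gets $q\le\alpha k<2k^2$ rather than $q\le\alpha$, but any bound depending only on $k$ makes your final limiting argument go through unchanged.)
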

\begin{proof}
It follows from \cite[Theorem 3.6]{Stepanov} that $\frac{1}{k}$ is an accumulation point of $\mathcal{T}_{3,sm}^{\textup{can}}\cap (\frac{1}{k},\frac{1}{k-1})$.

Suppose that $\ct\in \mathcal{T}_{3,sm}^{\textup{can}}\cap (\frac{1}{k},\frac{1}{k-1})$ is a canonical threshold computed by a weighted blow up with weights $w=(1,\alpha,\beta)$ with  $1 \le \alpha <\beta $. If $\alpha>1$, then by \cite[Proposition 3.3]{3ct}, 
$\frac{1}{k} < \ct \le \frac{1}{\alpha}+\frac{1}{\beta}$. It follows that  $\alpha < 2k$.

Compare the weight $w$ with the weight $w':=(1,\alpha,\beta-1)$. Note that the exceptional set of the weighted blow up of weight $w'$ is isomorphic to the weighted projective space $\bP(1,\alpha,\beta-1)$ which is clearly irreducible. 
By the inequalities in \cite[Lemma 2.1]{3ct}, we have
$$ \lfloor \frac{\alpha+\beta-1}{\alpha+\beta} m \rfloor  \ge \lceil \frac{\beta-1}{\beta} m \rceil=m-\lfloor \frac{m}{\beta}\rfloor.$$
Since $\ct=\frac{\alpha+\beta}{m}\in (\frac{1}{k},\frac{1}{k-1})$, one sees $\frac{m}{\beta}\ge \lfloor \frac{m}{\beta} \rfloor \ge \lceil \frac{m}{\alpha+\beta}\rceil =k$. 
In particular, $\beta k\le m<\alpha k+\beta k$. Let $p,q$ be two positive relatively prime integers with
$$\frac{q}{p}=\ct-\frac{1}{k}=\frac{\alpha+\beta}{m}-\frac{1}{k}=\frac{\alpha k+\beta k-m}{mk}\leq \frac{\alpha }{m}.$$
Then $q\leq \alpha \le 2k$.


Now, suppose $\{\ct_i=\frac{1}{k}+\frac{q_i}{p_i}\}$ is a sequence converging to a real number $x$ where each $\ct_i\in \mathcal{T}_{3,sm}^{\textup{can}}\cap (\frac{1}{k},\frac{1}{k-1})$ with positive integers $p_i, q_i$ and $q_i\leq 2k$. Since each $q_i\leq 2k$, by passing to a subsequence, one may assume $q_i=q$ is fixed for all $i$. In particular, $x=\lim_{i\to \infty} (\frac{1}{k}+\frac{q}{p_i})=\frac{1}{k}$.
This completes the proof.
\end{proof}

The argument for $cA$ case is similar. For the readers' convenience, we provide the proof.

\begin{prop}\label{cAacc}
Let $k\ge 2$ be an integer. If $x$ is an accumulation point of $\mathcal{T}_{3,cA}^{\textup{can}}\cap (\frac{1}{k},\frac{1}{k-1})$, then $x=\frac{1}{k}$. Moreover, if $\ct\in \mathcal{T}_{3,cA}^{\textup{can}}\cap (\frac{1}{k},\frac{1}{k-1})$, then $\ct=\frac{1}{k}+\frac{q}{p}$ with $p, q$ positive integers and $q\leq 2k$.
\end{prop}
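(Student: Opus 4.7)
The strategy is to mirror the proof of Proposition \ref{smacc}, with the adjustments needed for $cA$ singularities. By the classification of divisorial contractions to a $cA$ point (Kawakita \cite{Kawakita01,Kawakita02}), every $\ct\in\mathcal{T}_{3,cA}^{\textup{can}}$ is realized by a weighted blow up $\sigma_w\colon Y\to X$ on an embedding $X=(xy+g(z,t)=0)\subset\bC^4$ with admissible weight $w=(r_1,r_2,\alpha,\beta)$ satisfying the compatibility $r_1+r_2=w(g)$. The canonical threshold then takes the form $\ct=a/m$ where $a=\alpha+\beta-1$ is the discrepancy and $m=w(f)$ is the weighted multiplicity of the power series $f$ defining $S$. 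After relabeling we may assume $1\le\alpha\le\beta$.

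The first step is to bound $\alpha<2k$ when $\alpha>1$. This should follow from the analog of \cite[Proposition 3.3]{3ct} used in the smooth case: restricting $\sigma_w$ to a suitable transverse section yields an inequality of shape $\ct\le 1/\alpha+1/\beta$ (up to a correction of lower order in the numerator), and the hypothesis $\ct>1/k$ then forces $\alpha<2k$. The case $\alpha=1$ is automatic.

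The main step is the comparison argument. I would introduce an auxiliary weight $w'=(r_1',r_2',\alpha,\beta-1)$ chosen so as to remain admissible for the $cA$ equation, i.e., still satisfying $r_1'+r_2'=w'(g)$, and producing an irreducible exceptional divisor. Then \cite[Lemma 2.1]{3ct} furnishes a floor-function inequality which, when combined with $\lceil m/(\alpha+\beta-1)\rceil=k$ (coming from $\ct\in(1/k,1/(k-1))$), yields $\lfloor m/\beta\rfloor\ge k$, and hence $\beta k\le m$. Writing $\ct-1/k=q/p$ in lowest terms then gives
\[\frac{q}{p}=\frac{k(\alpha+\beta-1)-m}{mk}\le\frac{\alpha-1}{m},\]
so that $q\le\alpha-1<2k$. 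The accumulation claim follows by the same subsequence argument as in Proposition \ref{smacc}: any convergent sequence $\ct_i=1/k+q_i/p_i$ with $q_i\le 2k$ admits a subsequence with $q_i$ constant, forcing $p_i\to\infty$ and the limit $x=1/k$.

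The principal obstacle will be the construction of the auxiliary weight $w'$: unlike the smooth case, one cannot simply decrement $\beta$ while keeping $r_1,r_2$ fixed, since the compatibility $r_1'+r_2'=w'(g)$ must be preserved under the new weight. This forces one to simultaneously adjust $r_1$ or $r_2$ in a manner depending on the monomial structure of $g$, and one must then verify irreducibility of the new exceptional divisor. Once this is done, the remainder of the argument is purely combinatorial and essentially identical to the smooth case.
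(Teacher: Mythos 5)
Your high-level strategy---decrement a weight, compare via the floor/ceiling inequality of \cite[Lemma 2.1]{3ct}, and extract $q \le 2k$---is the right one, but the specific weight parameter you track is the wrong one, and the resulting numerical inequalities are false. Under Kawakita's classification \cite[Theorem 1.2(i)]{Kawakita05}, a $cA$ contraction is a weighted blow up on $(xy+g(z,u)=0)\subset\bC^4$ with weight $w=(r_1,r_2,a,1)$ subject to $r_1+r_2=w(g)=ad$; the discrepancy is $a$, \emph{not} $\alpha+\beta-1$ for the last two coordinate weights $\alpha,\beta$ (which are simply $a$ and $1$). The parameter that must be bounded by $2k$ is the smaller of $r_1,r_2$---the $x,y$-weights---not the smaller of the $z,u$-weights. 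Indeed, setting $r_1\le r_2$, the paper obtains $r_1<2k$ from $\ct\le 1/r_1+1/r_2$ (\cite[Proposition 4.2]{3ct}), whereas your claimed bound $\ct\le 1/\alpha+1/\beta$ is vacuous once $\alpha=1$.

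More seriously, your pivotal inequality $\beta k\le m$ is false: with $\beta=a$, the hypothesis $\ct=a/m>1/k$ already forces $m<ak=\beta k$. The reason the smooth-case computation does not transfer verbatim is that the coordinate-wise comparison factor in the $cA$ case is \emph{not} $(\beta-1)/\beta$. The correct auxiliary weight is $w_{a-1}=(r_1,\,r_2-d,\,a-1,\,1)$ (decrement $a$ by $1$ and simultaneously $r_2$ by $d$ to preserve $r_1+(r_2-d)=(a-1)d$), and since $r_2\le da$, the binding coordinate ratio is $(r_2-d)/r_2$, not $(a-1)/a$. Feeding this into \cite[Lemmas 2.1, 4.1]{3ct} gives
\[
\Bigl\lfloor\tfrac{a-1}{a}m\Bigr\rfloor\;\ge\;\Bigl\lceil\tfrac{r_2-d}{r_2}m\Bigr\rceil\;=\;m-\Bigl\lfloor\tfrac{dm}{r_2}\Bigr\rfloor,
\]
which together with $\lceil m/a\rceil=k$ yields $r_2k\le dm$ (not $\beta k\le m$), and then
\[
\frac{q}{p}=\frac{a}{m}-\frac{1}{k}=\frac{r_1+r_2}{dm}-\frac{1}{k}=\frac{r_1k+r_2k-dm}{dmk}\le\frac{r_1}{dm},
\]
so $q\le r_1<2k$. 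You correctly flag the construction of the decremented weight as the ``principal obstacle,'' but the issues above are independent of that construction: even granting the auxiliary weight, your bookkeeping bounds the wrong quantity and relies on an inequality that fails.
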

\begin{proof}
Suppose that $\ct:=\ct(X,S) \in \mathcal{T}_{3, cA}^{\textup{can}}\cap (\frac{1}{k},\frac{1}{k-1})$ is a canonical threshold realized by a divisorial contraction $\sigma: Y \to X$. Theorem 1.2(i) in \cite{Kawakita05} shows that there exists an analytical identification $P\in X \simeq o\in (\varphi=xy+g(z,u)=0)$ in $\bC^4$ where $o$ denotes the origin of $\bC^4$ and $\sigma$ is a weighted blow up of weight $w=wt(x,y,z,u)=(r_{1},r_{2},a,1)$ such that $w(g(z,u))=r_{1}+r_{2}=ad$ where $r_1,r_2,a,d$ are positive integers. 


Without loss of generality, we assume $r_1 \le r_2 $.
If $r_1>1$, then by \cite[Proposition 4.2]{3ct}, 
$\frac{1}{k} < \ct \le \frac{1}{r_1}+\frac{1}{r_2}$. It follows that  $r_1 < 2k$.

Compare the weights $w$ with the weights $w_{a-1}:=(r_1,r_2-d,a-1,1)$. 
By \cite[Lemmas 2.1, 4.1]{3ct}, we have
$$ \lfloor \frac{a-1}{a} m \rfloor  \ge \lceil \frac{r_2-d}{r_2} m \rceil=m-\lfloor \frac{dm}{r_2}\rfloor.$$
Since $\ct=\frac{a}{m}\in (\frac{1}{k},\frac{1}{k-1})$, one sees $\frac{r_1+r_2}{r_2}\cdot\frac{m}{a}=\frac{dm}{r_2}\ge \lceil \frac{m}{a}\rceil =k$. In particular, $r_2k\le dm<r_1k+r_2k$. Let $p,q$ be two positive relatively prime integers with
$$\frac{q}{p}=\ct-\frac{1}{k}=\frac{a}{m}-\frac{1}{k}=\frac{r_1+r_2}{dm}-\frac{1}{k}=\frac{r_1k+r_2k-dm}{dmk}\leq \frac{r_1}{dm}.$$
Then $q\leq r_1\le 2k$. In particular, the only accumulation point of $\mathcal{T}_{3,cA}^{\textup{can}}\cap (\frac{1}{k},\frac{1}{k-1})$ is $\frac{1}{k}$ as in the same argument in Proposition \ref{smacc}.
This completes the proof.

\end{proof}


The argument for $cA/n$ case is more complicated. 

\begin{prop}\label{cAnacc}
Let $k\ge 2$ be an integer. If $x$ is an accumulation point of $\mathcal{T}_{3,cA/n}^{\textup{can}}\cap (\frac{1}{k},\frac{1}{k-1})$, then $x=\frac{1}{k}$. 
\end{prop}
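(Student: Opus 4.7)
The plan is to follow the template of Propositions \ref{smacc} and \ref{cAacc}, now for the $cA/n$ case, using Kawakita's explicit classification of divisorial contractions to a $cA/n$ point in \cite{Kawakita05}. Any $\ct = \ct(X,S) \in \mathcal{T}_{3,cA/n}^{\textup{can}} \cap (\frac{1}{k}, \frac{1}{k-1})$ is realized by a weighted blow up $\sigma = \sigma_w : Y \to X$ with admissible weight $w = \frac{1}{n}(r_1, r_2, a, n)$ on a local embedding $P \in X \simeq (xy + g(z, u) = 0) \subset \bC^4/\frac{1}{n}(r_1, -r_1, a, 0)$, satisfying the appropriate coprimality conditions and $ad = r_1 + r_2 = n w(g)$. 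The weighted discrepancy of the exceptional $E$ is $a/n$ and $\ct = a/m$ with $m = n w(f)$.

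First I would reduce to $r_1 \leq r_2$ and establish a uniform bound $r_1 < 2k$ via the $cA/n$ analog of the inequality $\ct \leq \frac{1}{r_1} + \frac{1}{r_2}$ in \cite[Proposition 4.2]{3ct}, using $\ct > 1/k$. Then I would introduce the comparison admissible weight $w_{a-n} := \frac{1}{n}(r_1, r_2 - d, a - n, n)$, whose weighted discrepancy is $(a-n)/n$, exactly $1/n$ below that of $w$; the substitution of $n$ (rather than $1$) in the numerator of $a$ is the key index-theoretic observation flagged in the introduction. Applying \cite[Lemmas 2.1, 4.1]{3ct} to $(w, w_{a-n})$ should yield
$$\left\lfloor \frac{a-n}{a}\, m \right\rfloor \;\geq\; m - \left\lfloor \frac{dm}{r_2} \right\rfloor,$$
equivalently $\lfloor dm/r_2 \rfloor \geq \lceil nm/a \rceil$. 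Since $\ct = a/m \in (\frac{1}{k}, \frac{1}{k-1})$ forces $m/a \in (k-1, k)$, we have $nm/a \in (n(k-1), nk)$, so $\lceil nm/a \rceil \geq n(k-1) + 1$ and hence $dm \geq r_2(n(k-1) + 1)$.

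Writing $\ct - \frac{1}{k} = q/p$ in lowest terms, this gives
$$\frac{q}{p} = \frac{ak - m}{mk} = \frac{(r_1+r_2)k - dm}{dmk} \leq \frac{r_1 k - r_2(n-1)(k-1)}{dmk} \leq \frac{r_1}{dm},$$
so $q \leq r_1 < 2k$, independent of $n$. For any convergent sequence $\ct_i = \frac{1}{k} + q_i/p_i$ in $\mathcal{T}_{3,cA/n}^{\textup{can}} \cap (\frac{1}{k}, \frac{1}{k-1})$, a subsequence has $q_i$ constant (since $q_i \in \{1, \ldots, 2k-1\}$), forcing $p_i \to \infty$ and the limit equal to $1/k$, as in Proposition \ref{smacc}.

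The main obstacle is verifying that $w_{a-n}$ is genuinely an admissible weight in the cyclic quotient and that its exceptional divisor on the proper transform of $X$ is irreducible, uniformly across the numerous subtypes of Kawakita's classification; degenerate boundary cases (e.g.\ $r_2 = d$ or $r_1 = 1$) may force a different choice of comparison weight and need separate treatment, and the upper bound $\ct \leq \frac{1}{r_1} + \frac{1}{r_2}$ used in Step 1 likewise requires subtype-by-subtype verification.
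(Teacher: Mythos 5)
Your high-level idea — compare $w$ with the weight whose weighted discrepancy is $a-n$, exploiting the index $n$ — is exactly the paper's key observation, and you correctly flag admissibility and irreducibility as the checks needed. But the execution diverges from the paper and contains errors that are not mere typos.

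First, the comparison weight is wrong. Since $r_1+r_2=adn$ (note the factor $n$), dropping $a$ to $a-n$ while keeping $r_1$ and $d$ forces $r_2\mapsto r_2-dn^2$, so the paper's $w_{a-n}=\frac{1}{n}(r_1,r_2-dn^2,a-n,n)$; your $\frac{1}{n}(r_1,r_2-d,a-n,n)$ is only correct when $n=1$, i.e.\ you transcribed the $cA$ formula unchanged. Second, the algebra for $\ct-\frac1k$ has a missing $n$: with $a=\frac{r_1+r_2}{dn}$, one gets $\frac{q}{p}=\frac{(r_1+r_2)k-dnm}{dnmk}$, not $\frac{(r_1+r_2)k-dm}{dmk}$. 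Third, the bound from \cite[Proposition 5.2]{3ct} is $r_1<2kn$ (not $r_1<2k$), and \cite[Lemma 5.10]{3ct} is needed separately to bound $n\le 3k$. Most seriously, even with all three corrected, the argument breaks at the decisive inequality: the floor/ceiling comparison gives $\bigl\lfloor\frac{dn^2m}{r_2}\bigr\rfloor\ge\bigl\lceil\frac{nm}{a}\bigr\rceil$, and since $\frac{m}{a}\in(k-1,k)$ you only get $\bigl\lceil\frac{nm}{a}\bigr\rceil\ge n(k-1)+1$, not $\ge nk$. In the $cA$ case (Prop.\ \ref{cAacc}) this ceiling collapses to exactly $k$, yielding $dm\ge r_2k$ and hence $q\le r_1$; here the analogous inequality $dnm\ge r_2k$ does \emph{not} follow, so the numerator $(r_1+r_2)k-dnm$ can be of order $r_2$, which is unbounded. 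Bounding $q$ by a constant in $k$ therefore fails, and your final subsequence argument does not go through.

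The paper gets around this by a structurally different argument: it first shows (Claim \ref{bound dn}) that $dn<4k$ once $a\ge 6k^2$, reduces to the regime $a\gg 0$ with $dn, r_1, n$ bounded, then examines the monomial $\frak{m}''=x^{l_1}y^{l_2}z^{l_3}u^{l_4}$ achieving $m_{a-n}$, proves $\max\{l_2,l_3\}<k$ with one of them positive, establishes the exact bound $dnl_2+l_3\ge k$, and squeezes $\ct$ between $\frac{1}{dnl_2+l_3}$ and an expression that tends to $\frac{1}{dnl_2+l_3}$ as $a\to\infty$. Fixing $dn,r_1,l_2,l_3$ along a subsequence then forces $\ct_i\to 1/(dnl_2+l_3)=1/k$. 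This is a finer analysis than the ``bound $q$'' trick and appears necessary precisely because the $cA/n$ ceiling $\lceil nm/a\rceil$ is not pinned to a single value.
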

\begin{proof}
Suppose that the canonical threshold $\ct:=\ct(X,S)\in \mathcal{T}_{3,cA/n}^{\textup{can}}\cap (\frac{1}{k},\frac{1}{k-1})$ is computed by a weighted blow up $\sigma: Y \to X$ over a $cA/n$ point $P \in X$ with weighted discrepancy $a \ge 5$.
Theorem 1.2(i) in \cite{Kawakita05} shows that there exists an analytical identification $P\in X \simeq o\in (\varphi \colon xy+g(z^n,u)=0)$ in $\bC^4/\frac{1}{n}(1,-1,b,0)$ where $o$ denotes the origin of $\bC^4/ \frac{1}{n}(1,-1,b,0)$ and $\sigma$ is a weighted blow up of weight $w=wt(x,y,z,u)=\frac{1}{n} (r_{1},r_{2},a,n)$ satisfying the following:
\begin{itemize}
\item $nw(\varphi)=r_1+r_2=adn$ where $r_1,r_2,a,d,n$ are positive integers.
\item $z^{dn}\in g(z^n,u)$.
\item $a\equiv br_1$ (mod $n$) and $0<b<n$.
\item $\gcd(b,n)=\gcd(\frac{a-br_1}{n},r_1)=\gcd(\frac{a+br_2}{n},r_2)=1$ (See \cite[Lemma 6.6]{Kawakita05}).
\item $S$ is defined by the semi-invariant formal power series $f=0$ locally so that $\ct(X,S)=\frac{a}{m}$, where $m=nw(f)$.
\end{itemize}


Without loss of generality, we assume $r_1 \le r_2 $.
By \cite[Proposition 5.2, Lemma 5.10]{3ct} 
and  $\frac{1}{k} < \ct$, it follows that $r_1 < 2kn$ and $n\le 3k$.
Then, consider the weight $w_1=\frac{1}{n}(b^{*},dn-b^*,1,n)$ where $0<b^*<n$ and $bb^*\equiv 1$ (mod $n$). We will use frequently \cite[Lemma 2.1, Lemma 5.1]{3ct} to show the Claims below.
\begin{claim}\label{bound dn} If $a\ge 6k^2$, then $dn<4k$.
\end{claim} 
\begin{proof}[Proof of the Claim] We may assume $d>1$. Since $a\ge 6k^2$ and $b^*\ge 1$, we have $a\ge 2kn>r_1\ge \frac{r_1}{b^*}$.
Let $\frak{m}'\in f$ satisfy the weighted multiplicity $m_{1}=nw_{1}(\frak{m}')$. If the monomial $\frak{m}'$ doesn't involve the variable $y$, we see $m_1=nw_{1}(\frak{m}')\ge \frac{1}{a}nw(\frak{m}')\ge \frac{1}{a}m$. In particular, $\ct=\frac{a}{m}\le \frac{1}{m_1}\le \frac{a}{m}$ which is absurd. Thus, $\frak{m}'$ involves the variable $y$. Again, by $\frac{1}{k}<\ct=\frac{a}{m}\le \frac{1}{m_1}$, one sees $dn=dn-b^*+b^*\le m_1+b^*<k+n\le 4k$ and this verifies the claim.
\end{proof}

For our purpose,  we may assume $a>\max\{6k^2, \frac{r_1+dn^2-n}{dn-1}\}$.
Next, we compare the weight $w$ with the weight $w_{a-n}=\frac{1}{n}(r_1,r_2-dn^2,a-n,n)$.
Let $\frak{m}''=x^{l_1}y^{l_2}z^{l_3}u^{l_4}\in f$ satisfy the weighted multiplicity $m_{a-n}=nw_{a-n}(\frak{m}'')$.
\begin{claim} $\max\{l_2,l_3\}<k$ and either $l_2>0$ or $l_3>0$.
\end{claim}
\begin{proof}[Proof of the Claim]
From the assumption that $a>\frac{r_1+dn^2-n}{dn-1}$, we see $nw_{a-n}(y)=r_2-dn^2\ge a-n=nw_{a-n}(z)$.
Since $\frac{1}{k} < \ct\le \frac{a-n}{m_{a-n}}$, $\max\{l_2,l_3\}<k$.
Suppose that $l_2=l_3=0$. Then $m_{a-n}=nw_{a-n}(\frak{m}'')=nw(\frak{m}'')\ge nw(f)=m$ which leads to a contradiction $\frac{a-n}{m}\ge \frac{a-n}{m_{a-n}}\ge \ct=\frac{a}{m}$. Thus, either $l_2>0$ or $l_3>0$. This verifies the claim.
\end{proof}

\begin{claim}\label{squeezecAn} $dnl_2+l_3\ge k$ and $\frac{1}{dnl_2+l_3}\le \frac{a}{m}=\ct\le \frac{a-n}{(r_2-dn^2)l_2+(a-n)l_3}$.
\end{claim}
\begin{proof}[Proof of the Claim]
It is easy to see 
\[m_{a-n}=r_1l_1+(r_2-dn^2)l_2+(a-n)l_3+nl_4=nw(\frak{m}'')-(dn^2l_2+nl_3)\ge m-(dn^2l_2+nl_3).\]
Thus $$\frac{a}{m}=\ct\le \frac{a-n}{m_{a-n}}\le \frac{a-n}{m-(dn^2l_2+nl_3)}.$$ This gives  
$\frac{1}{dnl_2+l_3}\le \frac{a}{m}<\frac{1}{k-1}$. In particular, $dnl_2+l_3\ge k$.
On the other hand, we have $$m_{a-n}=nw_{a-n}(\frak{m}'')\ge nw_{a-n}(y^{l_2}z^{l_3})=(r_2-dn^2)l_2+(a-n)l_3.$$ 
Thus, 

\[ \ct\le \frac{a-n}{m_{a-n}}\le \frac{a-n}{(r_2-dn^2)l_2+(a-n)l_3}.\]
This finishes the proof of Claim \ref{squeezecAn}.
\end{proof} 
 
From Claim \ref{squeezecAn}, we note \[ \ct\le \frac{a-n}{(r_2-dn^2)l_2+(a-n)l_3}=\frac{1-\frac{n}{a}}{dnl_2+l_3-\frac{(r_1+dn^2)l_2+nl_3}{a}}\le \frac{1-\frac{n}{a}}{k-\frac{(r_1+dn^2)l_2+nl_3}{a}}.\]



Now, suppose $\{\ct_i=\frac{a_i}{m_i}\}$ is a sequence converging to a real number $x$ where each $\ct_i\in \mathcal{T}_{3,cA/n}^{\textup{can}}\cap (\frac{1}{k},\frac{1}{k-1})$ is realized as a weighted blow up with weights $\frac{1}{n}(r_{i1},r_{i2},a_i,1)$ with  $r_{i1} \le r_{i2}$ and $m_{a_i-n_i}=n_iw_{a_i-n_i}(\frak{m}_i'')$ for some monomial $\frak{m}_i''=x^{l_{i1}}y^{l_{i2}}z^{l_{i3}}u^{l_{i4}}\in f_i.$ 
By above discussions and passing to a subsequence, one may assume each $d_in_i=d'n', r_{i1}=r_1', l_{i2}=l_2', l_{i3}=l_3'$ for some fixed integers $d',n',r_1',l_2',l_3'$.
It follows from Claim \ref{squeezecAn} that $x=\lim_{a_i\to \infty} \frac{a_i}{m_i}=\frac{1}{d'n'l_2'+l_3'}=\frac{1}{k}.$ This completes the proof.
\end{proof}

The argument in \cite[Proposition 6.1]{3ct} allows us to have the generalization. 

\begin{prop}\label{cDacc}
Let $k\ge 2$ be an integer. Then $\mathcal{T}_{3,cD}^{\textup{can}}\cap (\frac{1}{k},\frac{1}{k-1})$
is a finite set. 
\end{prop}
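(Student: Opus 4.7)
The plan is to extend the argument of \cite[Proposition 6.1]{3ct}, which handles the interval $(\frac{1}{2},1)$ in the $cD$ case, to the general interval $(\frac{1}{k},\frac{1}{k-1})$, by combining the weight-comparison method used in Propositions \ref{cAacc} and \ref{cAnacc} with the structural constraints of $cD$ singularities. Suppose $\ct := \ct(X,S) \in \mathcal{T}_{3,cD}^{\textup{can}}\cap (\frac{1}{k},\frac{1}{k-1})$ is realized by a divisorial contraction $\sigma\colon Y\to X$. By Kawakita's classification \cite{Kawakita05}, there is an analytic identification $P\in X\simeq o\in (\varphi=0)\subset \bC^4$ and $\sigma$ is a weighted blow up of weight $w = wt(x,y,z,u)$ of a prescribed form with weighted discrepancy $a$, so $\ct = \frac{a}{m}$ where $m = w(f)$ for the defining series $f$ of $S$.

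First I would apply the Noether-type inequalities of \cite[Proposition 4.2]{3ct} and the $cD$ analogues used in the proof of \cite[Proposition 6.1]{3ct}: the hypothesis $\ct > \frac{1}{k}$ together with inequalities of the form $\ct \le \frac{1}{w_i}+\frac{1}{w_j}$ (whenever the relevant monomials appear in $f$) forces the two smallest components of $w$ to be bounded by an explicit function of $k$. Next, I would compare $w$ with a perturbed weight $w_{a-1}$ (or $w_{a-n}$ for higher index sub-cases of $cD$, treated exactly as in Proposition \ref{cAnacc}), extract from \cite[Lemma 2.1]{3ct} and the irreducibility of the resulting exceptional divisor an inequality of the shape
\[
\ct \le \frac{a-1}{(r_2-\delta)l_2 + (a-1)l_3 + \cdots},
\]
for certain exponents $l_i$ of a monomial $\frak{m}''\in f$, and combine with $\ct>\frac{1}{k}$ to bound the remaining weight parameters.

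The crucial extra input in the $cD$ case (compared with $cA/n$) is that the analytic normal form of $\varphi$ imposes additional divisibility and congruence relations on the weights (the coefficients of the leading terms of $\varphi$ are fixed, and the weighted initial form must still define a rational singularity on the exceptional divisor). These relations, once the two smallest weights are bounded by $k$, force the remaining discrete invariants into a finite set, so only finitely many weight tuples $w$ arise. For each such tuple the condition $\ct \in (\frac{1}{k},\frac{1}{k-1})$ restricts $m$ to finitely many integers, yielding finitely many values of $\ct$.

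The main obstacle is the abundance of subcases in Kawakita's classification of $cD$ divisorial contractions (indexed by several discrete invariants and by the analytic normal form of $\varphi$). Each subcase must be treated separately to verify both the irreducibility of the perturbed exceptional divisor and that the structural constraints on $\varphi$ together with the bound on the two smallest weights indeed pin down the remaining parameters to a finite set; this is where the difference with the $cA/n$ case, in which the parameter $dn$ could vary freely producing an accumulation point, becomes decisive and yields finiteness rather than accumulation.
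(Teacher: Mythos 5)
Your proposal correctly identifies that one should reduce via Kawakita's classification to a small number of explicit normal forms, bound the small components of the weight using Noether-type inequalities, and compare $w$ with a perturbed weight; but the mechanism you propose for getting \emph{finiteness} rather than mere accumulation is off the mark and leaves a real gap. You attribute the extra rigidity of the $cD$ case to ``additional divisibility and congruence relations'' coming from the normal form of $\varphi$ and the requirement that the weighted initial form stay rational, and you compare only with $w_{a-1}$ (mimicking the single comparison in the $cA/n$ case). Neither of these is what the paper does, and on their own they do not obviously pin down $r$.

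The paper's actual engine is a \emph{two-sided} floor--ceiling comparison combined with an arithmetic parity obstruction. Writing $\ct=\frac{a}{m}$ with weight $w=(r+1,r,a,1)$ and $2r+1=ad$ (Case 1): first a comparison with a low-discrepancy weight $w'=(s+1,s,1,1)$, $s=(d-1)/2$, bounds $d\le 2k-1$ and $m<4kr$; then one compares $w$ simultaneously with $w_1=(d,d,2,1)$ and $w_2=w_{a-2}=(r+1-d,r-d,a-2,1)$, both of which have irreducible exceptional divisors. By \cite[Lemma 2.1]{3ct} one gets
\[ \lfloor \tfrac{2}{a}m\rfloor \ \ge\ \lceil \tfrac{d}{r+1}m\rceil, \qquad \lfloor \tfrac{a-2}{a}m\rfloor \ \ge\ \lceil \tfrac{r-d}{r}m\rceil . \]
If $r$ were larger than $8k^2$, then $dm < r(r+1)$ and the right-hand sides sum to at least $m$; but since $a$ is odd and $a\nmid m$, the left-hand sides sum to exactly $m-1$ -- contradiction. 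This is purely an integer-gap argument, not a constraint forced by the coefficients of $\varphi$; the normal form only enters to guarantee irreducibility of the two auxiliary exceptional divisors. The same shape of argument, with $w_1=(d,d,1,1,d)$ and $w_{a-1}$, handles Case 2. Without the \emph{pair} of comparisons and the $a\nmid m$ parity trick, a single comparison with $w_{a-1}$ in the style of Proposition \ref{cAnacc} would at best reproduce an accumulation-type bound, not finiteness. So the missing idea in your proposal is precisely the double floor--ceiling inequality and the resulting parity contradiction that forces $r$ to be bounded.
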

\begin{proof}


Suppose that $\ct:=\ct(X,S)=\frac{a}{m} \in \mathcal{T}_{3,cD}^{\textup{can}}\cap (\frac{1}{k},\frac{1}{k-1})$ with $a\geq 5$ and computed by a divisorial contraction $\sigma$. \cite[Theorem 1.2]{Kawakita05} shows that $\sigma$ is classified by Case 1 and Case 2.

\noindent {\bf Case 1.}
Suppose $\sigma: Y \to X$  is a weighted blow up with weight $w=wt(x,y,z,u)=(r+1,r,a,1)$ with center $P\in X$ by the analytical identification:
\[ (P\in X)\simeq  o\in ( \varphi: x^2+xq(z,u)+y^2u+\lambda y z^2+\mu z^3+p(y,z,u)=0) \subset \bC^4,\]
where $o$ denotes the origin of $\bC^4$ and $2r+1=ad$ where $d\ge 3$ and $a$ is an odd integer.

\begin{claim}\label{cDdm0ACC1}
$d\leq 2k-1$ and $m< 4kr$.
\end{claim}

\begin{proof}[Proof of the Claim]
Let $s=\frac{d-1}{2}$ and $\sigma':Y'\to X$ be the weighted blow up of weights $w'=(s+1,s,1,1)$. 
By \cite[Lemma 2.1, Lemma 6.3]{3ct}, one sees the weighted multiplicity $m'=w'(f)\le k-1$ where the prime divisor $S$ is given by $f=0$.
Let $\frak{m}=x^{t_1}y^{t_2}z^{t_3}u^{t_4}\in f$ with $m'=w'(\frak{m})$. In particular, each $t_i<k$.
One sees $$m\leq w(\frak{m})=(r+1)t_1+rt_2+at_3+t_4<(2r+a+2)k<4kr.$$ 
If $d>2k-1$, we see $t_1=t_2=0$ and hence $$\frac{m}{a}\ge m'=w'(\frak{m})\ge \frac{1}{a}w(\frak{m})\ge \frac{m}{a},$$ a contradiction. 
Thus, $d\leq 2k-1$. This verifies the claim.
\end{proof}
We then consider the weighted blow up $\sigma_1\colon Y_1\to X$ (resp. $\sigma_2\colon Y_1\to X$) with weights $w_1=(d,d,2,1)$ (resp.  $w_2=(r+1-d,r-d,a-2,1)$). Note that the defining equation of the exceptional set of $\sigma_1$ is $x^2+\eta z^d$ for some nonzero constant $\eta$ as computation in \cite[Claim 6.6]{3ct} and hence the exceptional set of $\sigma_1$ is irreducible. By \cite[Lemma 6.3]{3ct}, 
the exceptional set of $\sigma_2$ is irreducible (see also \cite[Case Ic]{CJK15}).

Recall that $2r+1=ad$ and hence
\[w_1\succeq \frac{d}{r+1} w\ \ \textup{ and } \ \ \ w_2\succeq \frac{r-d}{r} w.\]
It follows from \cite[Lemma 2.1]{3ct} that \[\lfloor \frac{2}{a}m\rfloor \ge m_1\ge \lceil \frac{d}{r+1} m\rceil \ \ \textup{ and } \ \ \ \lfloor \frac{a-2}{a}m\rfloor \ge m_2\ge \lceil \frac{r-d}{r}m\rceil, \eqno{\dagger_1} \]
where $m_1:=w_1(f)$ and $m_2:=w_2(f)$ are the weighted multiplicities.
The conclusion is derived from the following.
\begin{claim}\label{r1bound} $r\le 8k^2$. 
\end{claim}
\begin{proof}[Proof of the Claim]
Suppose on the contrary that $r> 8k^2$. From Claim \ref{cDdm0ACC1}, one sees
$dm<4k(2k-1)r<8k^2r<(r+1)r$. Thus
\begin{align*}
& \lceil \frac{d}{r+1}m\rceil +\lceil \frac{r-d}{r}m\rceil \ge   \lceil \frac{d}{r+1}m +\frac{r-d}{r}m\rceil  = \lceil m-\frac{dm}{r(r+1)} \rceil=m.
\end{align*}

However, $a$ is odd and $a\nmid m$, hence  $\frac{2m}{a}$ is not an integer.
This implies that
\[\lfloor \frac{2}{a}m\rfloor+\lfloor \frac{a-2}{a}m\rfloor =m-1, \]
which contradicts to $\dagger_1$. This verifies Claim \ref{r1bound}.
\end{proof}

\noindent {\bf Case 2.} Suppose $\sigma$ is a weighted blow up with weight $w=(r+1,r,a,1,r+2)$ with center $P\in X$ by the analytical identification
$$o\in \left( \begin{array}{ll}
\varphi_{1} \colon x^2+yt+p(y,z,u)=0 ;\\
 \varphi_{2} \colon  yu+z^{d}+q(z,u)u+t=0 \\
\end{array} \right) \subset \mathbb{C}^5
$$
where $o$ denotes the origin of $\bC^5$ such that $r+1=ad$ where $d\ge 2$.

Compare the weight $w$ with the weights $w_1=(d,d,1,1,d)$ and $w_{a-1}=(r-d+1,r-d,a-1,1,r-d+2)$. 
By \cite[Lemma 2.1, Lemma 6.7, Lemma 6.8]{3ct}, we have
$$ \lfloor \frac{1}{a} m \rfloor  \ge \lceil \frac{d}{r+2} m \rceil \textup{ and } \lfloor \frac{a-1}{a} m \rfloor  \ge \lceil \frac{r-d}{r} m \rceil.   \eqno{\dagger_2}$$

\begin{claim}\label{cDcase2dm0ACC}
$d\le k-1$ and $m< 4kr$.
\end{claim}

\begin{proof}[Proof of the Claim]
By \cite[Lemma 2.1, Lemma 6.8]{3ct}, one sees the weighted multiplicity $m_1=w_1(f)\le k-1$ where the prime divisor $S$ is given by $f=0$.
Let $\frak{m}=x^{\alpha_1}y^{\alpha_2}z^{\alpha_3}u^{\alpha_4}t^{\alpha_5}\in f$ with $m_1=w_1(\frak{m})$. In particular, each $\alpha_i<k$.
One sees $$m\leq w(\frak{m})=(r+1)\alpha_1+r\alpha_2+a\alpha_3+\alpha_4+(r+2)\alpha_5<(3r+a+4)k<4kr.$$ 
If $d>k-1$, we see $\alpha_1=\alpha_2=\alpha_5=0$ and hence $$\frac{m}{a}\ge m_1=w_1(\frak{m})\ge \frac{1}{a}w(\frak{m})\ge \frac{m}{a},$$ a contradiction. 
Thus, $d\leq k-1$. This verifies the Claim.
\end{proof}

\begin{claim}\label{r2bound} $r\le 8k^2-2$. 
\end{claim}
\begin{proof}[Proof of the Claim]
Suppose on the contrary that $r> 8k^2-2$. From Claim \ref{cDcase2dm0ACC}, one sees
$2dm<8k^2r<(r+2)r$. Thus
\begin{align*}
& \lceil \frac{d}{r+2}m\rceil +\lceil \frac{r-d}{r}m\rceil \ge   \lceil \frac{d}{r+2}m +\frac{r-d}{r}m\rceil  = \lceil m-\frac{2dm}{r(r+2)} \rceil=m.
\end{align*}

However, $a\nmid m$, hence  
\[\lfloor \frac{m}{a}\rfloor+\lfloor \frac{a-1}{a}m\rfloor =m-1, \]
which contradicts to $\dagger_2$. The proof of Proposition \ref{cDacc} is completed.
\end{proof}
 
Proposition \ref{cDacc} is verified by Claim \ref{r1bound} (resp. \ref{r2bound}) in Case 1 (resp. 2). \end{proof}

Similarly, we have the generalization of \cite[Proposition 7.1]{3ct} as follows.

\begin{prop}\label{cD2acc}
Let $k\ge 2$ be an integer. Then the only possible accumulation point of $\mathcal{T}_{3,cD/2}^{\textup{can}}\cap (\frac{1}{k},\frac{1}{k-1})$ is $\frac{1}{k}$. 
\end{prop}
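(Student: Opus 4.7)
The plan is to mirror the strategy of Proposition \ref{cAnacc} for the $cA/n$ case, combined with the two-case structure of Proposition \ref{cDacc}, since $cD/2$ points share with $cA/n$ the cyclic quotient of index $n=2$ and with $cD$ the two branches of Kawakita's classification of divisorial contractions. First I would apply \cite{Kawakita05} together with the list collected in the proof of \cite[Proposition 7.1]{3ct} to reduce, for $a$ sufficiently large, to a divisorial contraction $\sigma:Y\to X$ realized by a weighted blow up of admissible weight $w=\frac{1}{2}(\cdots)$ on an explicit hypersurface model inside $\bC^4/\frac{1}{2}(1,1,1,0)$ (Case~1) or on a codimension-two complete intersection in $\bC^5/\frac{1}{2}(\cdots)$ (Case~2), each equipped with a numerical identity analogous to the corresponding $cD$ case that ties the weights to the weighted discrepancy $a$.

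For each case I would carry out two steps. The first is a \emph{rigidity} step, modelled on Claims~\ref{cDdm0ACC1} and \ref{cDcase2dm0ACC}: compare $w$ with a suitable auxiliary weight $w_1$ of small weighted discrepancy (at index $\tfrac{1}{2}$) and use \cite[Lemma~2.1]{3ct} together with $\ct<\frac{1}{k-1}$ to force every exponent in a monomial $\frak{m}\in f$ realizing $w_1(f)$ to be $<k$. Combined with the $cD/2$ analogue of the bound $r_1 = O(k)$ already available from \cite[Section~7]{3ct}, this pins the auxiliary parameters $(d,r_1)$ into a finite set independent of $a$.

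The second step is the \emph{squeeze} that identifies the limit. Following the observation singled out in the introduction, I would compare $w$ with $w_{a-2}$ (where $n=2$ is the index of $P$) and choose a monomial $\frak{m}''=x^{l_1}y^{l_2}z^{l_3}u^{l_4}\in f$ (with an additional $t^{l_5}$ factor in Case~2) realizing $m_{a-2}=2w_{a-2}(f)$. The linear algebra of Claim~\ref{squeezecAn} then yields, after elementary manipulation,
\[
\frac{1}{2d\,l_2+l_3}\;\le\;\ct\;\le\;\frac{1-2/a}{k-C/a},
\]
with $0\le l_2,l_3<k$, with $2d\,l_2+l_3\ge k$, and with $C$ a quantity bounded in terms of $k$. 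For a sequence $\{\ct_i=a_i/m_i\}\subset \mathcal{T}_{3,cD/2}^{\textup{can}}\cap(\frac{1}{k},\frac{1}{k-1})$ converging to $x$, passing to a subsequence makes $(d_i,r_{i1},l_{i2},l_{i3})$ constant equal to $(d',r_1',l_2',l_3')$; then letting $a_i\to\infty$ collapses both sides of the squeeze to $\tfrac{1}{2d'l_2'+l_3'}=\tfrac{1}{k}$, forcing $x=\tfrac{1}{k}$.

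The main obstacle I anticipate is in Case~2: because the local model is a codimension-two complete intersection, one has to verify that both of the chosen auxiliary weights $w_1$ and $w_{a-2}$ produce irreducible exceptional divisors on $Y$ (handled for pure $cD$ in \cite[Case~Ic]{CJK15} and \cite[Lemma~6.8]{3ct}, but requiring a separate check at index~$2$), and one has to track two defining semi-invariants $\varphi_1,\varphi_2$ through the weighted multiplicity bookkeeping simultaneously. A secondary delicate point is that the parity argument used in Case~1 of Proposition~\ref{cDacc} --- the combination ``$a$ odd and $a\nmid m$'' which forced the $\lfloor\cdot\rfloor$ inequalities of $\dagger_1$ to fail --- must now be recast modulo the index $n=2$, since the parity of $a$ and its residue modulo $n$ are entangled, and a finer residue analysis is needed to preserve the strict contradiction for large $r$ without assuming rigidity a priori.
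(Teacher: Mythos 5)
Your high-level plan is the right one for Case 1 and matches the paper, but two of your concrete guesses are off in ways that matter. In Case 1 the local model is $w=\frac{1}{2}(r+2,r,a,2)$ with $r+1=ad$, and the auxiliary weight is $w_{a-2}=\frac{1}{2}(r-2d+2,r-2d,a-2,2)$. The weight drop from $w$ to $w_{a-2}$ is $\frac{1}{2}(2d,2d,2,0)$, so \emph{both} the $x$- and $y$-exponents contribute: the squeeze bound is $dl_1+dl_2+l_3\ge k$ and $\frac{1}{dl_1+dl_2+l_3}\le\ct\le\frac{a-2}{(r-2d+2)l_1+(r-2d)l_2+(a-2)l_3}$. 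Your $2dl_2+l_3$ is the $cA/n$ pattern, where only $y$ drops because $r_1$ is pinned by the congruence $a\equiv br_1\pmod n$; in $cD/2$ the symmetric $(r+2,r)$ structure forces $l_1$ into the picture, and you must also show $\max\{l_1,l_2,l_3\}<k$, not only $l_2,l_3<k$.

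For Case 2 the paper does \emph{not} run a squeeze at all. The model $\frac{1}{2}(r+2,r,a,2,r+4)$ with $r+2=a(2d+1)$ is compared against $w_1=\frac{1}{2}(2d+1,2d+1,1,2,2d+1)$ and $w_{a-1}=\frac{1}{2}(r-2d+1,r-2d-1,a-1,2,r-2d+3)$ (drop $a$ by $1$, not by the index $2$), and the rigidity bounds $2d+1\le k-1$, $m<4kr$ feed into the floor/ceiling contradiction
\[
\Big\lfloor \tfrac{m}{a}\Big\rfloor+\Big\lfloor \tfrac{a-1}{a}m\Big\rfloor=m-1
\quad\text{versus}\quad
\Big\lceil \tfrac{(2d+1)m}{r+4}\Big\rceil+\Big\lceil \tfrac{(r-2d-1)m}{r}\Big\rceil\ge m
\]
for $r>16k^2-4$. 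This shows $\mathcal{T}^{\textup{can}}_{3,cD/2}$ contributed by Case 2 (within the interval) is \emph{finite}, which is stronger than convergence to $\frac{1}{k}$ and sidesteps the irreducibility issue you flag: irreducibility for $w_{a-1}$ is what \cite[Lemma 7.7]{3ct} supplies, whereas for your proposed $w_{a-2}$ it is not in the literature and would require a separate argument. Your instinct that a residue/parity contradiction enters somewhere is correct — but it enters as the boundedness mechanism in Case 2, not as a refinement of the squeeze; and in Case 1 the paper drops the parity contradiction entirely and uses only the squeeze (see the paper's Remark following the proposition for why finiteness in Case 1 is left open).
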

\begin{proof}
Suppose that $\ct:=\ct(X,S)=\frac{a}{m} \in \mathcal{T}_{3,cD/2}^{\textup{can}}\cap (\frac{1}{k},\frac{1}{k-1})$ with $a\geq 5$ and computed by a divisorial contraction $\sigma$. \cite[Theorem 1.2]{Kawakita05} shows that $\sigma$ is classified by Case 1 and Case 2.

\noindent {\bf Case 1.}
Suppose $\sigma$ is a weighted blow up $\sigma: Y \to X$ with weight $w=\frac{1}{2}(r+2,r,a,2)$ with center $P\in X$ by the analytical identification:
\[ o\in (\varphi: x^2+xzq(z^2,u)+y^2u+\lambda y z^{2\alpha -1}+p(z^2,u)=0 ) \subset \mathbb{C}^4/\frac{1}{2}(1,1,1,0)\]
where $o$ denotes the origin of $\mathbb{C}^4/\frac{1}{2}(1,1,1,0)$ such that $r+1=ad$ where both $a$ and $r$ are odd.

\begin{claim}\label{cDdm0ACC}
$d\leq k$ and $m< 4kr$.
\end{claim}

\begin{proof}[Proof of the Claim]
Let $s=d-1$ and $\sigma':Y'\to X$ be the weighted blow up of weights $w'=\frac{1}{2}(s+2,s,1,2)$. 
By \cite[Lemma 2.1, Lemma 7.3]{3ct}, one sees the weighted multiplicity $m'=2w'(f)\le k-1$ where the integral and effective divisor $S$ is given by $f=0$.
Let $\frak{m}=x^{t_1}y^{t_2}z^{t_3}u^{t_4}\in f$ with $m'=2w'(\frak{m})$. In particular, each $t_i<k$.
One sees $$m\leq 2w(\frak{m})=(r+2)t_1+rt_2+at_3+2t_4<(2r+a+2)k<4kr.$$ 
If $s=d-1>k-1$, we see $t_1=t_2=0$ and hence $$\frac{m}{a}\ge m'=w'(\frak{m})\ge \frac{1}{a}w(\frak{m})\ge \frac{m}{a},$$ a contradiction. 
Thus, $d\leq k$. This verifies the claim.
\end{proof}

Next, we compare the weight $w$ with the weight $w_{a-2}=\frac{1}{2}(r-2d+2,r-2d,a-2,2)$.
Let $\frak{m}''=x^{l_1}y^{l_2}z^{l_3}u^{l_4}\in f$ satisfy the weighted multiplicity $m_{a-2}=2w_{a-2}(\frak{m}'')$.
\begin{claim}\label{123} $\max\{l_1,l_2,l_3\}<k$ and at least one of $l_1,l_2, l_3$ is nonzero.
\end{claim}
\begin{proof}[Proof of the Claim]
Since $\frac{1}{k} < \ct\le \frac{a-2}{m_{a-2}}$, one sees \begin{align*} (a-2)k&>m_{a-2}=2w_{a-2}(\frak{m}'')\\&\ge 2w_{a-2}(x^{l_1}y^{l_2}z^{l_3})=(r-2d+2)l_1+(r-2d)l_2+(a-2)l_3.
\end{align*} As $r-2d=ad-2d-1\ge a-2-1$ and $a\ge 5$, one has $l_1+l_2+l_3<k$.
Suppose that $l_2=l_3=0$. Then $m_{a-n}=nw_{a-n}(\frak{m}'')=nw(\frak{m}'')\ge nw(f)=m$ which leads to a contradiction $\frac{a-2}{m}\ge \frac{a-2}{m_{a-2}}\ge \ct=\frac{a}{m}$. Thus, at least one of $l_1, l_2, l_3$ is positive. This verifies the claim.
\end{proof}

\begin{claim}\label{squeezecD2} $dl_1+dl_2+l_3\ge k$ and $\frac{1}{dl_1+dl_2+l_3}\le \frac{a}{m}\le \frac{a-2}{(r-2d+2)l_1+(r-2d)l_2+(a-2)l_3}$.
\end{claim}
\begin{proof}[Proof of the Claim]
It is easy to see 
\begin{align*}
m_{a-2}&=(r-2d+2)l_1+(r-2d)l_2+(a-2)l_3+2l_4\\ &=2w(\frak{m}'')-2(dl_1+dl_2+l_3) \ge m-2(dl_1+dl_2+l_3).
\end{align*}
Thus $$\frac{a}{m}=\ct\le \frac{a-2}{m_{a-2}}\le \frac{a-2}{m-2(dl_1+dl_2+l_3)}.$$ This gives 
$\frac{1}{dl_1+dl_2+l_3}\le \frac{a}{m}<\frac{1}{k-1}$. In particular, $dl_1+dl_2+l_3\ge k$.
On the other hand, we have $$m_{a-2}=2w_{a-2}(\frak{m}'')\ge 2w_{a-2}(x^{l_1}y^{l_2}z^{l_3})=(r-2d+2)l_1+(r-2d)l_2+(a-2)l_3.$$ 
Thus, 

\[ \ct\le \frac{a-2}{m_{a-2}}\le \frac{a-2}{(r-2d+2)l_1+(r-2d)l_2+(a-2)l_3}.\]
This finishes the proof of Claim \ref{squeezecD2}.
\end{proof} 
 
From Claim \ref{squeezecAn}, we note \begin{align*} \ct&\le \frac{a-2}{(r-2d+2)l_1+(r-2d)l_2+(a-2)l_3} =\frac{1-\frac{2}{a}}{dl_1+dl_2+l_3-\frac{(2d-1)l_1+3l_2+2l_3}{a}} \\ &\le \frac{1-\frac{2}{a}}{k-\frac{(2d-1)l_1+3l_2+2l_3}{a}}.\end{align*}

Now, suppose $\{\ct_i=\frac{a_i}{m_i}\}$ is a sequence converging to a real number $x$ where each $\ct_i:=\ct(X_i,S_i)\in \mathcal{T}_{3,cD/2}^{\textup{can}}\cap (\frac{1}{k},\frac{1}{k-1})$ is realized as a weighted blow up with weights $\frac{1}{2}(r_{i}+2,r_{i}, a_i, 2)$ with $r_i+1=a_id_i$ and $m_{a_i-2}=2w_{a_i-2}(\frak{m}_i'')$ for some monomial $\frak{m}_i''=x^{l_{i1}}y^{l_{i2}}z^{l_{i3}}u^{l_{i4}}\in f_i$ where $f_i$ denotes a formal power series defining $S_i$.
By above discussions and passing to a subsequence, one may assume each $d_i=d', l_{i1}=l_1', l_{i2}=l_2', l_{i3}=l_3'$ for some fixed integers $d',l_1',l_2',l_3'$.
It follows from Claim \ref{squeezecD2} that $x=\lim_{a_i\to \infty} \frac{a_i}{m_i}=\frac{1}{d'l_1'+d'l_2+l_3'}=\frac{1}{k}.$

\noindent {\bf Case 2.}
Suppose $\sigma$ is a weighted blow up with weight $w=\frac{1}{2}(r+2,r,a,2,r+4)$ with center $P\in X$ by the analytical identification:
$$o\in \left( \begin{array}{ll}
\varphi_{1}:= x^2+yt+p(z^2,u)=0 \\
 \varphi_{2}:= yu+z^{2d+1}+q(z^{2},u)zu+t=0 \\
\end{array} \right) \textup{ in } \mathbb{C}_{x,y,z,u,t}^5/\frac{1}{2}(1,1,1,0,1),$$ where $o$ denotes the origin of $\mathbb{C}_{x,y,z,u,t}^5/\frac{1}{2}(1,1,1,0,1)$ such that $r+2=a(2d+1)$ where $d$ is a positive integer.

Compare the weight $w$ with the weights $w_1=\frac{1}{2}(2d+1,2d+1,1,2,2d+1)$ and $w_{a-1}=\frac{1}{2}(r-2d+1,r-2d-1,a-1,2,r-2d+3)$.
By \cite[Lemma 2.1, Lemma 7.6, Lemma 7.7]{3ct}\footnote{The relation $r'+2=a'd$ in the statement of \cite[Lemma 7.7]{3ct} should be replaced by $r'+2=a'(2d+1)$. It is a typo.}, we have
$$ \lfloor \frac{1}{a} m \rfloor  \ge \lceil \frac{2d+1}{r+4} m \rceil \textup{ and } \lfloor \frac{a-1}{a} m \rfloor  \ge \lceil \frac{r-2d-1}{r} m \rceil. \eqno{\dagger_3}$$

\begin{claim}\label{cD2case2dm0ACC}
$2d+1\le k-1$ and $m< 4kr$.
\end{claim}

\begin{proof}[Proof of the Claim]
By \cite[Lemma 2.1, Lemma 7.6]{3ct}, one sees the weighted multiplicity $m_1=w_1(f)\le k-1$ where the integral and effective divisor $S$ is given by $f=0$.
Let $\frak{m}=x^{\alpha_1}y^{\alpha_2}z^{\alpha_3}u^{\alpha_4}t^{\alpha_5}\in f$ with $m_1=2w_1(\frak{m})$. In particular, each $\alpha_i<k$.
One sees $$m\leq 2w(\frak{m})=(r+2)\alpha_1+r\alpha_2+a\alpha_3+2\alpha_4+(r+4)\alpha_5<(3r+a+6)k<4kr.$$ 
If $2d+1>k-1$, we see $\alpha_1=\alpha_2=\alpha_5=0$ and hence $$\frac{m}{a}\ge m_1=2w_1(\frak{m})\ge \frac{1}{a}2w(\frak{m})\ge \frac{m}{a},$$ a contradiction. 
Thus, $2d+1\leq k-1$. This verifies the claim.
\end{proof}

\begin{claim} $r\le 16k^2-4$. 
\end{claim}
\begin{proof}[Proof of the Claim]
Indeed, suppose $r> 16k^2-4$. From Claim \ref{cD2case2dm0ACC}, one sees
$(8d+4)m<16k^2r<r(r+4)$. Thus
\begin{align*}
& \lceil \frac{2d+1}{r+4} m\rceil +\lceil \frac{r-2d-1}{r}m\rceil \ge   \lceil \frac{2d+1}{r+4} m+ \frac{r-2d-1}{r}m\rceil = \lceil m-\frac{(8d+4)m}{r(r+4)} \rceil=m.
\end{align*}

However, $a\nmid m$, hence  
\[\lfloor \frac{m}{a}\rfloor+\lfloor \frac{a-1}{a}m\rfloor =m-1, \]
which contradicts to $\dagger_3$.
 \end{proof}
This completes the proof of Proposition \ref{cD2acc}.

\end{proof}

\begin{rem} In Case 1 of the argument in Proposition \ref{cD2acc}, consider the weighted blow up $\sigma_1\colon Y_1\to X$ (resp. $\sigma_2\colon Y_2\to X$) with weights $w_1=\frac{1}{2}(2d,2d,2,2)$ (resp.  $w_2=\frac{1}{2}(r+2-2d,r-2d,a-2,2)$). By \cite[Lemma 7.3]{3ct}, $\sigma_2$ has irreducible exceptional divisor. If the inequality $\frac{2}{m_1}\geq \ct(X,S)$ holds where $m_1=2w_1(f)$ denotes the weighted multiplicity of $f$ which defines $S$, then we obtain upper bounds of $d$ and $r$ in terms of $k$ as in Case 2. In this case, the set $\mathcal{T}_{3,cD/2}^{\textup{can}}\cap (\frac{1}{k},\frac{1}{k-1})$ is finite. 
\end{rem}



\section{Proof of Theorem \ref{ACCctpair}}

We adopt notions in \cite[Sections 1,2]{3ct}. In this section, by abuse of notation, we denote by each $S_i$ an $\bQ$-Cartier effective divisor on $\bQ$-factorial projective 3-fold $X_i$. 

Suppose on the contrary that there exists an infinite strictly increasing sequence $\{\ct_i(X_i,B_i;S_i)\}$.
Note that it is known that each $\ct_i(X_i,B_i;S_i)$ is realized as a divisorial contraction $\sigma_i\colon Y_i\to X_i$ (See, for example, \cite{Corti} or \cite[Proposition 13-1-8]{Matsuki}). 
Let $B_i'$ be the strict transform of $B_i$ in $Y_i$. Let $R_i$ be an extremal ray with $(K_{Y_i}+{B_i}')\cdot R_i<0$. 
Since $B_i'$ is effective and $\text{Exc}(\sigma_i)$ is the exceptional divisor, $B_i'\cdot R_i\ge 0$ and thus $K_{Y_i}\cdot R_i<0$. So $\sigma_i$ is a $K_{Y_i}$-negative extremal divisorial contraction.
By passing to a subsequence, we may assume every center $Z(\sigma_i)$ of $\sigma_i$ is a point and of the same type. 

Denote by $N(B_i)$ (resp. $N(S_i)$) the number of irreducible components of $B_i$ (resp. $S_i$). 
That is, $B_i=\sum_{k=1}^{N(B_i)}b_{ik}B_{ik}$ (resp. $S_i=\sum_{k=1}^{N(S_i)}s_{ik}S_{ik}$) where each $B_{ik}$ (resp. $S_{ik}$) is an integral divisor and coefficient $b_{ik}$ (resp. $s_{ik}$) is positive.
Write 
$$K_{Y_i}=\sigma_i^*K_{X_i}+a_iE_i, B_{ik}'=\sigma_i^*B_{ik}-p_{ik}E_i, \textup{ and }S_{ik}'=\sigma_i^*S_{ik}-m_{ik}E_i.$$
We have $\ct_i(X_i,B_i;S_i)=\frac{a_i-\sum_{k=1}^{N(B_i)} b_{ik}p_{ik}}{\sum_{k=1}^{N(S_i)} s_{ik}m_{ik}}$. 
By multiplying with the index of the center, we assume that $a_i$ is the weighted discrepancy (resp. $p_{ik},m_{ik}$ are weighted multiplicities).

For our purpose, we assume each $B_i$ (resp. $S_i$) contains the center $Z(\sigma_i)$.  
Let $q$ be a positive integer with $1/q\le \ct_1(X_1,B_1;S_1)$ and let $I_b>0$ (resp. $J_b>0$) be the minimal element of the DCC set $I-\{0\}$ (resp. $J$). 
\begin{claim} \label{boundn} For all $i$, $N(B_i)\le \frac{2}{I_b}$ and $N(S_i)\le \frac{2q}{J_b}$. 
\end{claim}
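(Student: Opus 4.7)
The plan is to introduce, for each $i$, an auxiliary divisorial valuation $F_i$ centered at $P_i := Z(\sigma_i)$, distinct in general from the divisor $E_i$ computing $\ct_i$ (whose weighted discrepancy $a_i$ need not stay bounded along the sequence), chosen so that
\[
a(F_i; X_i, 0) \;\le\; 2\,\nu_i, \qquad \nu_i := \inf\{\mathrm{ord}_{F_i}(D) : D \text{ integral effective Weil divisor through } P_i\} > 0.
\]
By the reduction, all centers $P_i$ share a common terminal type, and the choice of $F_i$ is type-dependent. For smooth $P_i$, take $F_i$ to be the ordinary blow-up, giving $a(F_i;X_i,0)=2$ and $\nu_i=1$. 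For a non-Gorenstein terminal point of Gorenstein index $n_i\ge 2$ (the cyclic quotient, $cA/n$, or $cD/2$ cases), take $F_i$ to be the Kawamata economic divisorial contraction, giving $a(F_i;X_i,0)=1/n_i$ and $\nu_i=1/n_i$. For the Gorenstein cDV cases ($cA$, $cD$), take an irreducible component of the exceptional set of a suitable ambient weighted blow-up restricted to $X_i$, where the classification cited in the introduction yields a choice with $a(F_i;X_i,0)/\nu_i \le 2$.

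Once $F_i$ is in place, both bounds follow immediately by applying the canonical hypotheses to the single divisor $F_i$. Since $(X_i, B_i)$ is canonical, $a(F_i; X_i, B_i)\ge 0$ expands to
\[
a(F_i; X_i, 0) \;\ge\; \sum_k b_{ik}\,\mathrm{ord}_{F_i}(B_{ik}) \;\ge\; \nu_i \sum_k b_{ik},
\]
so $\sum_k b_{ik}\le 2$, and since $b_{ik}\ge I_b$, we obtain $N(B_i) I_b \le 2$, i.e.\ $N(B_i)\le 2/I_b$. Since $(X_i, B_i+\ct_i S_i)$ is canonical, the analogous application of $a(F_i; X_i, B_i+\ct_i S_i)\ge 0$ gives
\[
\ct_i\,\nu_i \sum_k s_{ik} \;\le\; \ct_i \sum_k s_{ik}\,\mathrm{ord}_{F_i}(S_{ik}) \;\le\; a(F_i; X_i, 0) \;\le\; 2\,\nu_i,
\]
hence $\ct_i\sum_k s_{ik}\le 2$. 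Combining with $\ct_i\ge \ct_1\ge 1/q$ and $s_{ik}\ge J_b$ yields $\sum_k s_{ik}\le 2q$, and therefore $N(S_i)\le 2q/J_b$.

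The main obstacle is the first step: securing the universal ratio bound $a(F_i;X_i,0)/\nu_i \le 2$ uniformly across all terminal 3-fold types. The smooth case is sharp (ratio exactly $2$), and the non-Gorenstein cases are handled cleanly by the economic divisorial contraction (where the ratio is in fact $1$). The more delicate cases are the Gorenstein cDV points, where the local class group may admit Weil divisors with fractional valuation on a naive ambient blow-up; handling these requires a case-by-case appeal to the explicit classification of divisorial contractions (Hayakawa, Kawakita, Kawamata) referenced in the introduction to select an $F_i$ realizing the required bound. Once this auxiliary fact is in hand, the claim follows by the direct arithmetic above.
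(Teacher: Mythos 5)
Your proof takes essentially the same route as the paper. The paper also applies the two canonicity inequalities at the minimal-discrepancy divisorial contraction $\sigma_i^1$ over the common center $Z(\sigma_i)$: the ordinary blow-up (weighted discrepancy $2$) when the center is smooth, and the discrepancy-$1/n$ contraction (weighted discrepancy $1$) in the remaining terminal types, together with the fact that the weighted multiplicities $p_{ik}^1, m_{ik}^1$ of the integral divisors $B_{ik}, S_{ik}$ along $\sigma_i^1$ are at least $1$. This is exactly your uniform ratio bound $a(F_i;X_i,0)/\nu_i \le 2$ rephrased in the paper's weighted language, and the arithmetic that follows is the same.

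One remark on the caveat you flag at the end. The obstacle you anticipate at Gorenstein $cA$/$cD$ points is real only as a gap in exposition, and the paper is equally silent about it. The needed fact has a clean direct justification that does not require a case-by-case trawl through the Hayakawa--Kawakita classification: at an isolated terminal \emph{Gorenstein} point of a $\mathbb{Q}$-factorial threefold, any $\mathbb{Q}$-Cartier Weil divisor through the point is actually Cartier there, because a torsion class in the local class group would yield a nontrivial finite quasi-\'etale cover of the germ, contradicting triviality of the local fundamental group of the smooth locus of a terminal Gorenstein $3$-fold singularity. Hence every $\sigma_i^1$-valuation of $B_{ik}$ or $S_{ik}$ is an integer $\ge 1$, and with weighted discrepancy $1$ your ratio is in fact $\le 1$. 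At a non-Gorenstein center of index $n_i$ the analogous statement is that the local Cartier index of any $\mathbb{Q}$-Cartier divisor divides $n_i$, which is why the paper's weighted multiplicities are integers $\ge 1$. Adding this one observation closes the outline; otherwise the argument is the one the paper gives.
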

Indeed, let $\sigma_i^1$ denote a divisorial contraction with minimal discrepancy with center $Z(\sigma_i)$. Let $m_{ik}^1$ (resp. $p_{ik}^1$) denote the corresponding weighted multiplicity. Suppose that $Z(\sigma_i)$ is a smooth point. Then $\sigma_i^1$ is the usual blow up and  
$$1/q\le \ct_i(X_i,B_i;S_i)\le \frac{2-\sum_{k=1}^{N(B_i)} b_{ik}p_{ik}^1}{\sum_{k=1}^{N(S_i)} s_{ik}m_{ik}^1}\le \frac{2}{\sum_{k=1}^{N(S_i)} s_{ik}m_{ik}^1}\le \frac{2}{\sum_{k=1}^{N(S_i)}J_b\cdot 1}$$ and $$0<2-\sum_{k=1}^{N(B_i)} b_{ik}p_{ik}^1\le 2-\sum_{k=1}^{N(B_i)} I_b\cdot 1.$$ If $Z(\sigma_i)$ is of other type, the weighted discrepancy of $\sigma_i^1$ is $1$ and the same argument holds. This finishes the proof of the Claim.

By Claim \ref{boundn} and passing to subsequences, we may assume for all $i$, $N(B_i)=k_1$ and $N(S_i)=k_2$ for some fixed integers $k_1$ and $k_2$. 
As both $I$ and $J$ are the DCC sets, we may further assume that for every $k$, the sequence $\{b_{ik}\}$ (resp. $\{s_{ik}\}$) is non-decreasing (cf. \cite[Lemma 2.3]{AM}).
Since $S_i$ and $B_i$ are effective, by passing to a subsequence, \cite[Theorem 1.5]{3ct} and Lemma \ref{boundpairindex} below, we may assume all center of $\sigma_i$ share the same index. 
Moreover, by passing to subsequences, we may assume 
the sequence of Newton polytopes $\{\Gamma^+(g_{ik})\}$ with $B_{ik}=(g_{ik}=0)$(resp. $\{\Gamma^+(f_{ik})\}$ with $S_{ik}=(f_{ik}=0)$) is non-increasing for all $k=1,...,k_1$ (resp. $k=1,...,k_2$).

Recall that it follows from \cite[Theorem 1.5]{3ct} that each $\sigma_i\colon Y_i\to X_i$ is a weighted blow up with weight $w_i$.
Now, for all integers $i<j$, we are able to choose the weight $w^i_j$ satisfying the following.
\begin{enumerate}
 \item 
 the  weighted multiplicities satisfies $n_iw_i(g_{ik}) \le n_iw_i(g_{jk})$ for all $k=1,...,k_1$ (resp. $n_iw_i(f_{ik}) \le n_iw_i(f_{jk})$ for all $k=1,...,k_2$). 
 \item  $n_iw_i \preceq n_jw^i_j$ where $n_i=n_j$ is the index of the center $Z(\sigma_i)$. ;
    \item  the weighted blow up $\sigma^i_j:Y^i_j\to X_j$ with weight $w^i_j$ over the point $P_j\in X_j$ has irreducible exceptional divisor, denoted by $E^i_j$. Then in this situation,  $K_{Y^i_j}=\sigma^{i*}_j K_{X_j}+\frac{a_i}{n_j}E^i_j$. (cf. \cite[Lemmas 4.1, 5.1, 6.3, 6.8, 7.3, 7.7]{3ct})
  \end{enumerate}

Combining (1) with (2), one sees for all $k$, $$m_{ik}=n_i w_i(f_{ik}) \leq n_i w_i(f_{jk})
\le n_j w^i_j(f_{jk})\ ;\ p_{ik}=n_i w_i(g_{ik}) \leq n_i w_i(g_{jk})
\le n_j w^i_j(g_{jk}).$$
Moreover by (3), $E^i_j$ defines a valuation on $X_j$ and computation on $E^i_j$ gives $\frac{a_i-\sum_{k=1}^{k_1} b_{ik} n_j w^i_j(g_{jk})}{\sum_{k=1}^{k_2} s_{ik}n_jw^i_j(f_{jk})}  \ge \ct(X_j, B_j; S_j)$.
Above inequalities yields \[\ct(X_i, B_i; S_i)=\frac{a_i-\sum_{k=1}^{k_1} b_{ik}p_{ik}}{\sum_{k=1}^{k_2} s_{ik}m_{ik}}
\ge \frac{a_i-\sum_{k=1}^{k_1} b_{ik} n_j w^i_j(g_{jk})}{\sum_{k=1}^{k_2} s_{ik}n_jw^i_j(f_{jk})}\ge \ct(X_j, B_j; S_j),\]
which is the desired contradiction and we complete the proof of Theorem \ref{ACCctpair}.

\begin{lem}\label{boundpairindex} Keep notations above.
Write $B=\sum_{k=1}^{k_1} b_kB_k$ and $S=\sum_{k=1}^{k_2} s_k S_k$. Suppose that $\ct(X,B;S)=\frac{a-\sum_{k=1}^{k_1}b_kp_k}{\sum_{k=1}^{k_2}s_km_k} >   \frac{1}{q}$ where $q\ge 2$ is an integer and the center computing canonical threshold is of type $cA/n$ and each integral divisor $B_k$ (resp. $S_k$) contains the center. Then either $n \leq 3\max\{\frac{1}{b_1},...,\frac{1}{b_{k_1}},\frac{q}{s_1},...,\frac{q}{s_{k_2}}\}$ or $\ct(X,B;S)=\frac{1-\sum_{k=1}^{k_1}b_kt_k}{\sum_{k=1}^{k_2}s_k l_k}$ for some positive integers $t_1,...,t_{k_1},l_1,...,l_{k_2}$.
\end{lem}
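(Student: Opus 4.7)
By \cite[Theorem 1.5]{3ct} the canonical threshold is realised by a divisorial contraction $\sigma\colon Y\to X$, and the classification of $cA/n$-divisorial extractions \cite[Theorem 1.2]{Kawakita05} identifies $P\in X$ analytically with $o\in(xy+g(z^n,u)=0)\subset \bC^4/\frac{1}{n}(1,-1,b,0)$ and $\sigma$ as the weighted blowup of weight $w=\frac{1}{n}(r_1,r_2,a,n)$ with $r_1+r_2=adn$, $a\equiv br_1\pmod n$, etc. Writing $g_k=0$ (resp.\ $f_k=0$) for a local defining equation of $B_k$ (resp.\ $S_k$), both $p_k=nw(g_k)$ and $m_k=nw(f_k)$ are positive integers. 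When $a=1$ the identity $\ct(X,B;S)=(1-\sum b_k p_k)/(\sum s_k m_k)$ already fits the second alternative with $t_k=p_k,\ l_k=m_k$; we may therefore assume $a\ge 2$.

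Mimicking the proof of Proposition \ref{cAnacc}, introduce the auxiliary weight $w_1=\frac{1}{n}(b^*,dn-b^*,1,n)$ where $0<b^*<n$ and $bb^*\equiv 1\pmod n$. By \cite[Lemma 5.1]{3ct} this weight is admissible and the corresponding blowup has irreducible exceptional divisor with weighted discrepancy $1$, so the definition of the canonical threshold gives
$$\frac{1}{q}<\ct(X,B;S)\le \frac{1-\sum_{k=1}^{k_1} b_k\,nw_1(g_k)}{\sum_{k=1}^{k_2} s_k\,nw_1(f_k)},$$
whence both $\sum_{k} b_k\,nw_1(g_k)<1$ and $\sum_{k} s_k\,nw_1(f_k)<q$. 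If some $nw_1(g_{k_0})\ge n/3$ then $b_{k_0}\,n/3<1$, hence $n<3/b_{k_0}\le 3\max\{1/b_k,q/s_k\}$; similarly, if some $nw_1(f_{k_0})\ge n/3$ then $n<3q/s_{k_0}\le 3\max\{1/b_k,q/s_k\}$. In either situation the first alternative holds.

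It therefore remains to prove that, under $a\ge 2$, at least one of these weighted multiplicities must attain $n/3$, for otherwise the numerical coincidence $\ct(X,B;S)=(1-\sum b_k\,nw_1(g_k))/(\sum s_k\,nw_1(f_k))$ is forced and gives the second alternative with $t_k=nw_1(g_k)$, $l_k=nw_1(f_k)$. This is the pair analogue of the estimate $n\le 3k$ used at the start of Proposition \ref{cAnacc} (see \cite[Proposition 5.2, Lemma 5.10]{3ct}). The point is that the $\mu_n$-semi-invariance of each $g_k$ and $f_k$, combined with the congruence $a\equiv br_1\pmod n$ and $a\ge 2$, forces the lowest $w_1$-weight monomial of at least one defining equation to involve $y$ or $u$ (of $w_1$-weights $dn-b^*$ and $n$) to sufficient order; otherwise $n$ is already small relative to $b^*$ and $dn-b^*$.

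The main obstacle is the last combinatorial step: one must track the semi-invariant characters of all $g_k$ and $f_k$ simultaneously under the cyclic action and show that the universal constant $3$ survives uniformly in the multi-component setting, so that the coefficients $b_k,s_k$ enter only through the factor $\max\{1/b_k,q/s_k\}$. This requires bookkeeping on how the lowest-weight monomials of each component contribute to $\sum b_k\,nw_1(g_k)<1$ and $\sum s_k\,nw_1(f_k)<q$, isolating the dominant term to which the $n/3$ estimate applies.
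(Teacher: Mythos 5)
Your approach is close in spirit but contains a genuine gap, and in fact you choose the wrong auxiliary weight. The paper's proof (which generalises \cite[Lemma 5.10]{3ct}, not the $w_1$ computation of Claim \ref{bound dn}) assumes $n > 3\max\{1/b_k, q/s_k\}$ and introduces the weight $w_3=\frac{1}{n}(r_1',r_2',3,n)$ of weighted discrepancy $3$, chosen with $r_1'+r_2'=3dn$, $3\equiv br_1'\pmod{n}$ \emph{and crucially $\min\{r_1',r_2'\}>n$}. From $\frac{3-\sum b_k p_{3k}}{\sum s_k m_{3k}}\ge\ct>\frac{1}{q}$ and $n>3q/s_k$, $n>3/b_k$ one gets $m_{3k}<n$ and $p_{3k}<n$ for every $k$; since all of $r_1',r_2',n$ exceed $n$ while $nw_3(z)=3$, the monomial of minimal $w_3$-weight in each $f_k$ (resp.\ $g_k$) is forced to be a pure power $z^{l_k}$ (resp.\ $z^{t_k}$), whence $m_{3k}=3l_k$, $p_{3k}=3t_k$, and also $m_k\le al_k$, $p_k\le at_k$. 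The chain
\[\frac{1-\sum b_k t_k}{\sum s_k l_k}=\frac{3-\sum b_k p_{3k}}{\sum s_k m_{3k}}\ge \ct(X,B;S)=\frac{a-\sum b_k p_k}{\sum s_k m_k}\ge \frac{1-\sum b_k t_k}{\sum s_k l_k}\]
then collapses to equality, giving the second alternative.

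Your proposal instead uses $w_1=\frac{1}{n}(b^*,dn-b^*,1,n)$ (discrepancy $1$). This weight does not satisfy the key property $\min\{r_1',r_2'\}>n$: indeed $b^*$ can be as small as $1$, and when $d=1$ also $dn-b^*$ can be small. Hence the bounds $\sum b_k\,nw_1(g_k)<1$ and $\sum s_k\,nw_1(f_k)<q$ do not force the minimal-$w_1$-weight monomial in each $g_k$, $f_k$ to be a pure power of $z$; it may involve $x$ or $y$. Moreover, even granting that the monomials were pure powers of $z$, you only have the one-sided inequality $\ct\le\frac{1-\sum b_k\,nw_1(g_k)}{\sum s_k\,nw_1(f_k)}$; to conclude the second alternative you must also prove the reverse inequality using $m_k\le al_k$, $p_k\le at_k$ (or its analogue), which your write-up never does. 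Finally, your last paragraph explicitly concedes that the crucial combinatorial step ``remains to be proved,'' so the argument is acknowledged as incomplete. The fix is to replace $w_1$ by the discrepancy-$3$ weight $w_3$ with both non-$(z,u)$ entries larger than $n$, at which point the argument closes exactly as in the paper.
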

\begin{proof}
This proof is a generalization of \cite[Lemma 5.10]{3ct}.

Suppose that  $n>3\max\{\frac{1}{b_1},...,\frac{1}{b_{k_1}},\frac{q}{s_1},...,\frac{q}{s_{k_2}}\}$.
Consider the weights $w=\frac{1}{n}(r_1,r_2,a,n)$ and $w_3=\frac{1}{n}(r_1',r_2',3,n)$ satisfying $r_1+r_2=adn$, $r_1'+r_2'=3dn$, $\min\{r_1',r_2'\}>n$ and $a\equiv br_1$ (mod $n$) and $3\equiv br_1'$ (mod $n$). 
Note that exceptional set of the weighted blow up of weight $w_3$ is an irreducible divisor (cf. \cite[Lemma 5.1]{3ct}). 
Denote by $p_{3k}:=nw_3(g_k)$ and $m_{3k}:=nw_3(f_k)$ the weighted multiplicities where $B_k$ (resp. $S_k$) is defined by $g_k=0$ (resp. $f_k=0$).
Note that $$ \frac{3-\sum_{k=1}^{k_1}b_kp_{3k}}{\sum_{k=1}^{k_2}s_km_{3k}} \ge\ct(X,B;S)>\frac{1}{q}.$$ 
One sees for every $k=1,...,k_2$ (resp. $k=1,...,k_1$), $$\min\{r_1',r_2'\}>n>\frac{3q}{s_{k}}>m_{3{k}}\textup{ (resp.  } 3-b_{k}p_{3{k}}\ge 3-\sum_{k=1}^{k_1}b_kp_{3k}>0\textup{)}.$$ 
This implies that there exists $z^{l_{k}}\in f_{k}$ (resp. $z^{t_{k}}\in g_k$) such that  $m_{3k}=n w_3(z^{l_k})$ (resp. $p_{3k}=nw_3(z^{t_k})$). This implies in particular that $$ m_k=nw(f_k)\le  nw(z^{l_k})=al_k , \textup{ (resp.  }p_k=nw(g_k)\le  nw(z^{t_k})=at_k \textup{ )}.$$
Thus, $$\frac{1-\sum_{k=1}^{k_1}b_kt_k}{\sum_{k=1}^{k_2}s_k l_k}=\frac{3-\sum_{k=1}^{k_1}b_kp_{3k}}{\sum_{k=1}^{k_2}s_km_{3k}} \ge\ct(X,B;S)=\frac{a-\sum_{k=1}^{k_1}b_kp_k}{\sum_{k=1}^{k_2}s_km_k} \ge\frac{1-\sum_{k=1}^{k_1}b_kt_k}{\sum_{k=1}^{k_2}s_k l_k}.$$
\end{proof}

\end{document}